\newcommand{\halmos}{\rule{5pt}{5pt}}
\numberwithin{equation}{section}
\newtheorem{theorem}{Theorem}[section]
\newtheorem{prop}[theorem]{Proposition}
\theoremstyle{definition}
\begin{document}
\title[$q$-Heun equation  and initial-value space of $q$-Painlev\'e equation]
{$q$-Heun equation  and initial-value space of $q$-Painlev\'e equation}
\author{Shoko Sasaki}
\address{Address of S.S.,~S.T., Department of Mathematics, Faculty of Science and Engineering, Chuo University, 1-13-27 Kasuga, Bunkyo-ku, Tokyo 112-8551, Japan}
\author{Shun Takagi}
\author{Kouichi Takemura}
\address{Address of K.T., Department of Mathematics, Ochanomizu University, 2-1-1 Otsuka, Bunkyo-ku, Tokyo 112-8610, Japan}
\email{takemura.kouichi@ocha.ac.jp}
\subjclass[2020]{39A13,33E17}
\keywords{$q$-Painlev\'e equation, $q$-Heun equation, initial-value space, Lax pair}
\begin{abstract}
We show that the $q$-Heun equation and its variants appear in the linear $q$-difference equations associated to some $q$-Painlev\'e equations by considering the blow-up associated to their initial-value spaces.
We obtain the firstly degenerated Ruijsenaars-van Diejen operator from the linear $q$-difference equation associated to the $q$-Painlev\'e equation of type $E_8$.
\end{abstract}
\maketitle

\section{Introduction}
The Painlev\'e transcendent is a transcendental solution of one of the six Painlev\'e equations and it is potentially a member of prospective special functions.
The sixth Painlev\'e equation is a non-linear generalization of the hypergeometric equation, and the hypergeometric equation is a standard form of the second order Fuchsian differential equation with three singularities $\{ 0,1,\infty \}$.
Heun's differential equation is a standard form of the second order Fuchsian differential equation with four singularities $\{ 0,1,t,\infty \}$, and it is written as
\begin{equation*}
\frac{d^2y}{dx^2}  +  \left( \frac{\gamma}{x}+\frac{\delta }{x-1}+\frac{\epsilon}{x-t}\right) \frac{dy}{dx} +\frac{\alpha \beta x -B}{x(x-1)(x-t)} y =  0 ,
\end{equation*}
under the condition $\gamma +\delta +\epsilon =\alpha +\beta +1$.
The parameter $B$, which is independent from the local exponents, is called the accessory parameter.
It is known that Heun's differential equation is related with the sixth Painlev\'e equation (see \cite{DK}, \cite{TakMH} and references therein).

A $q$-difference analogue of Heun's differential equation was given by Hahn \cite{Hahn}, and we may write it as 
\begin{align}
    &(x-h_{1}q^{1/2})(x-h_{2}q^{1/2})g(x/q)+l_{3}l_{4}(x-l_{1}q^{-1/2})(x-l_{2}q^{-1/2})g(qx) \label{eq:qHeun} \\
    &-\{(l_{3}+l_{4})x^{2}+Ex+(l_{1}l_{2}l_{3}l_{4}h_{1}h_{2})^{1/2}(h_{3}^{1/2}+h_{3}^{-1/2})\}g(x)=0 \, . \nonumber
\end{align}
Note that the $q$-Heun equation was used to introduced the variants of the $q$-hypergeometric equation \cite{HMST}.
Several discrete Painlev\'e equations had been studied in the 1990's.
Sakai \cite{Sak} proposed a list of the second order discrete Painlev\'e equations.
In Sakai's list, each member of the $q$-difference Painlev\'e equation was labelled by the affine root system from the symmetry.
A relationship between the $q$-Heun equation and the $q$-Painlev\'e equation of type $D^{(1)}_5$ ($q$-$P(D^{(1)}_{5})$) was pointed out in \cite{TakR}, and it was called the $q$-Painlev\'e-Heun correspondence.

In this paper, we establish a further relationship between the $q$-Heun equation and the $q$-Painlev\'e equation by using the initial-value space of the $q$-Painlev\'e equation.
Namely we derive the $q$-Heun equation by considering the Lax pair and the initial-value space.

Jimbo and Sakai \cite{JS} introduced a $q$-analogue of the sixth Painlev\'e equation by considering the connection preserving deformation of certain $q$-linear difference equations.
It is essentially equivalent to a compatibility condition of a Lax pair, and the Lax pair, which was reformulated by Kajiwara, Noumi and Yamada \cite{KNY}, is written as 
\begin{align}
L_{1} &= \Bigl\{\frac{z(g\nu_{1}-1)(g\nu_{2}-1)}{qg}-\frac{\nu_{1}\nu_{2}\nu_{3}\nu_{4}(g- \nu_{5}/\kappa_{2})(g- \nu_{6}/\kappa_{2})}{fg}\Bigr\} \nonumber \\
&+\frac{\nu_{1}\nu_{2}(z- q \nu_{3})(z - q \nu_{4})}{q(qf-z)}(g-T^{-1}_{z})+\frac{(z- \kappa_{1} /\nu_{7})(z - \kappa_{1}/\nu_{8})}{q(f-z)}\Bigl(\frac{1}{g} - T_{z}\Bigr) , \nonumber \\
L_{2} &= \Bigl(1-\frac{f}{z} \Bigr)T+T_{z}-\frac{1}{g} . \nonumber 
\end{align}
Here $T_{z}$ represents the transformation $z\rightarrow qz$ and $T$ represents the time evolution.
The parameters are constrained by the relation $\kappa_{1} ^2 \kappa_{2} ^2 =q \nu _1 \nu _2 \dots \nu _8$. 
We use the notation such as $T(f)=\overline{f}$ and $T^{-1}(g)=\underline{g}$.
The time evolution of the parameters is given by $\overline{\kappa }_1= \kappa_{1} /q$, $\overline{\kappa }_2= q \kappa_{2}$ and $ \overline{\nu }_i= \nu _{i}$ $(i=1,2,\dots ,8)$.
It follows from the compatibility condition for the Lax operators $L_1$ and $L_2$ (see \cite{KNY}) that the parameters $f$ and $g$ satisfies
\begin{equation}
f\overline{f} = \nu_3\nu_4\, \frac{ ( g - \nu_5 /\kappa_2 ) ( g - \nu_6/\kappa_2 )}{ ( g - 1/\nu_1 ) ( g - 1/\nu_2) }, \quad
g\underline{g} = \frac{1}{\nu_1\nu_2}\frac{( f - \kappa_1 /\nu_7 ) ( f - \kappa_1/\nu_8 )}{(f - \nu_3)(f - \nu_4)}. \nonumber 
\end{equation}
This is the $q$-Painlev\'e equation essentially introduced by Jimbo and Sakai \cite{JS}, which we denote by $q$-$P(D^{(1)}_{5})$.
The time evolution of the variable $f$ is given by
\begin{equation}
\overline{f} = \nu_3\nu_4\, \frac{ ( g - \nu_5 /\kappa_2 ) ( g - \nu_6/\kappa_2 )}{ f( g - 1/\nu_1 ) ( g - 1/\nu_2) } \label{eq:ftimeevol}
\end{equation}
and the value $\overline{f} $ is indefinite in the case $(f,g) = (0, \nu_5 /\kappa_2)$ because of the form $0/0$ in the right hand side of Eq.~(\ref{eq:ftimeevol}).
To resolve the indefiniteness, we consider the blow-up at $(f,g) = (0, \nu_5 /\kappa_2 )$.
Namely we set $(f,g)=(f_{1},f_{1}g_{1}+ \nu_5 /\kappa_2 )$. Then the point $(f,g)=(0, \nu_5 /\kappa_2)$ corresponds to the line $f_{1}=0$, and we have
\begin{equation}
\overline{f} = \, \frac{\nu_3\nu_4  g_{1} ( f_{1}g_{1}+ ( \nu_5 - \nu_6 )/\kappa_2 )}{ ( f_{1}g_{1}+ \nu_5 /\kappa_2 - 1/\nu_1) ( f_{1}g_{1}+ \nu_5/\kappa_2 - 1/\nu_2 ) } . \nonumber 
\end{equation}
Hence the indefiniteness is resolved.
The indefiniteness may occur at the following eight points;
\begin{equation}
P_{i}:(\infty,{1}/\nu_{i})_{i=1,2},\quad (\nu_{i},\infty)_{i=3,4},\quad (0,\nu_{i}/\kappa_{2})_{i=5,6},\quad (\kappa_{1}/\nu_{i},0)_{i=7,8}. \nonumber 
\end{equation}
The space of initial values of $q$-$P(D^{(1)}_5)$ is defined by blowing up eight points $P_1, \dots ,P_8$ of ${\mathbb P}^1 \times {\mathbb P}^1$.

We now consider the blow-up at $P_{5} $ $(0,\nu_{5}/\kappa_{2})$ jointly with the $q$-difference equation $L_1 y(z)=0$, where $L_1$ is one of the Lax pair.
Note that the equation $L_1 y(z)=0$ itself is different from the $q$-Heun equation in Eq.~(\ref{eq:qHeun}).
We substitute $(f,g)=(f_{1},f_{1}g_{1}+\nu_{5}/\kappa_{2})$ into the equation $L_{1} y(z) = 0 $ and set $f_1 =0$.
Then we have
\begin{align}
    &(z-q\nu_{3})(z-q\nu_{4})y(z/q)+\frac{1}{\nu_{1}\nu_{2}}\Bigl(z-\frac{\kappa_{1}}{\nu_{7}}\Bigr) \Bigl(z-\frac{\kappa_{1}}{\nu_{8}}\Bigr)y(qz) \nonumber \\
    &-\Bigl[\Bigl( \frac{1}{\nu_{1}}+\frac{1}{\nu_{2}}\Bigr)z^{2} +\Bigl\{ q g_{1}\nu_{3}\nu_{4} \Bigl( 1 -\frac{\nu_{6}}{\nu_{5}} \Bigr) -\frac{q\nu_{5}(\nu_{3} + \nu_{4} )}{\kappa_{2}}-\frac{\kappa_{1}\kappa_{2}(\nu_{7} +\nu_{8} )}{\nu_{1}\nu_{2}\nu_{5}\nu_{7}\nu_{8}}\Bigr \} z \nonumber \\
    &\qquad +\frac{q\kappa_{1}{\nu_{3}^{1/2}{\nu_{4}}^{1/2}}}{{\nu_{1}}^{1/2}{\nu_{2}}^{1/2}{\nu_{7}}^{1/2}{\nu_{8}}^{1/2}}\Bigl\{\Bigl(\frac{q\nu_{6}}{\nu_{5}}\Bigr)^{1/2}+\Bigl(\frac{q\nu_{6}}{\nu_{5}}\Bigr)^{-1/2}\Bigr\}\Bigr]y(z)=0  \nonumber 
\end{align}
in \S \ref{sec:D15P5P6}.
Thus we obtain the $q$-Heun equation with the accessory parameter by considering the equation $L_1 y(z)=0$ on the line $f_1=0$, which is an exceptional curve with respect to the blow-up at the point $P_5$ on the ininial value space.
We can also obtain the $q$-Heun equation by the blow-up at $P_i$ $(i=1,\dots , 8)$. 

We establish similar results on the $q$-Painlev\'e equation of type $E^{(1)}_6$, $E^{(1)}_7$ and $E^{(1)}_8$ in this paper.
The corresponding linear $q$-difference equations for the cases $E^{(1)}_6$ and $E^{(1)}_7$ are the variants of $q$-Heun equation \cite{TakqH} (see Eqs.~(\ref{eq:qHeunE16}), (\ref{eq:qHeunE17})).
Note that $q$-Heun equation and the variants are also obtained by degenerations of Ruijsenaars-van Diejen system \cite{RuiN,vD,TakR}.
The corresponding linear $q$-difference equations for the case $E^{(1)}_8$ is the firstly degenerated Ruijsenaars-van Diejen operator.
Note that Noumi, Ruijsenaars and Yamada \cite{NRY} obtained the non-degenerate Ruijsenaars-van Diejen system from the Lax formalism of the elliptic Painlev\'e equation $e$-$P(E^{(1)}_8)$. 

This paper is organized as follows.
In section \ref{sec:ini}, we recall the $q$-Painlev\'e equations $q$-$P(D^{(1)}_5)$, $q$-$P(E^{(1)}_6)$ and $q$-$P(E^{(1)}_7)$, their Lax pairs, and the corresponding initial-value spaces by following the review by Kajiwara, Noumi and Yamada \cite{KNY}.
We consider restriction of the variables of initial-value spaces and obtain the $q$-Heun equation and the variants.
In section \ref{sec:qPE8}, we recall the $q$-Painlev\'e equation of type $E^{(1)}_8$ by following Yamada \cite{Y}, and obtain the firstly degenerated Ruijsenaars-van Diejen operator by considering a suitable limit.

\section{$q$-Painlev\'e equation, Lax pairs, initial-value space of $q$-Painlev\'e equation and $q$-Heun equation} \label{sec:ini}

Lax pairs and the initial-value spaces for discrete Painlev\'e equations were reviewed by Kajiwara, Noumi and Yamada in \cite{KNY}.
We recall them for the $q$-Painlev\'e equations of type $D^{(1)}_5$, $E^{(1)}_6$ and $E^{(1)}_7 $, and we apply the procedure of the blow-up associated with the the initial-value spaces to the linear $q$-difference equation related to the Lax pairs.

Throughout this section, we assume that the parameters $ \kappa_{1} , \kappa_{2} , \nu _1 , \nu _2 , \dots ,\nu _8$ are non-zero and satisfy the relation $ \kappa_{1} ^2 \kappa_{2} ^2 =q \nu _1 \nu _2 \dots \nu _8$. 

\subsection{The case $D^{(1)}_5$} $ $ 

The $q$-Painlev\'e equation of type $D^{(1)}_5 $ ($q$-$P(D^{(1)}_5)$) was given as
\begin{equation}
f\overline{f} = \nu_3\nu_4\, \frac{ ( g - \nu_5/\kappa_2 ) ( g - \nu_6/ \kappa_2 )}{( g - 1/\nu_1 ) ( g - 1/\nu_2 ) }, \quad
g\underline{g} = \frac{1}{\nu_1\nu_2}\frac{( f - \kappa_1 /\nu_7 ) ( f - \kappa_1/\nu_8 )}{(f - \nu_3)(f - \nu_4)}. \nonumber 
\end{equation}
The space of initial condition for $q$-$P(D^{(1)}_5)$ is realized by blowing up eight points $P_{1}, \dots , P_8 $ of ${\mathbb P^1}\times {\mathbb P^1}$, where 
\begin{equation}
P_{i}:(\infty,{1}/\nu_{i})_{i=1,2},\quad (\nu_{i},\infty)_{i=3,4},\quad (0,\nu_{i}/\kappa_{2})_{i=5,6},\quad (\kappa_{1}/\nu_{i},0)_{i=7,8} \nonumber
\end{equation}
On the other hand, $q$-$P(D^{(1)}_5)$ is realized by the compatibility condition for the Lax pair $L_1$ and $L_2$, where
\begin{align}
L_{1} &= \Bigl\{\frac{z(g\nu_{1}-1)(g\nu_{2}-1)}{qg}-\frac{\nu_{1}\nu_{2}\nu_{3}\nu_{4}(g- \nu_{5}/\kappa_{2})(g- \nu_{6}/\kappa_{2})}{fg}\Bigr\} \nonumber \\
&+\frac{\nu_{1}\nu_{2}(z- q \nu_{3})(z - q \nu_{4})}{q(qf-z)}(g-T^{-1}_{z})+\frac{(z- \kappa_{1} /\nu_{7})(z - \kappa_{1}/\nu_{8})}{q(f-z)}\Bigl(\frac{1}{g} - T_{z}\Bigr) \nonumber \\
L_{2} &= \Bigl(1-\frac{f}{z} \Bigr)T+T_{z}-\frac{1}{g} \nonumber 
\end{align}
We track the linear differential equation $L_1 y(z)= 0$ on the process of the blow-up of the point $P_i$ $(i=1,2,\dots ,8)$ respectively, and we investigate a relationship with the $q$-Heun equation
\begin{align}
    &(x-h_{1}q^{1/2})(x-h_{2}q^{1/2})g(x/q)+l_{3}l_{4}(x-l_{1}q^{-1/2})(x-l_{2}q^{-1/2})g(qx) \label{eq:qHeunD15}\\
    &-\{(l_{3}+l_{4})x^{2}+Ex+(l_{1}l_{2}l_{3}l_{4}h_{1}h_{2})^{1/2}(h_{3}^{1/2}+h_{3}^{-1/2})\}g(x) =0 .\nonumber 
\end{align}
The parameter $E$ is called the accessory parameter.
Note that detailed calculation in the following subsections was performed in \cite{Ssk}.

\subsubsection{The points $P_{5}:(0,\nu_{5}/\kappa_{2})$ and $P_{6}:(0,\nu_{6}/\kappa_{2})$} \label{sec:D15P5P6} $ $

We realize the blow-up at $(f,g)=(0,\nu_{5}/\kappa_{2})$ by setting $(f,g)=(f_{1},f_{1}g_{1}+\nu_{5}/\kappa_{2})$.
Then the point $P_{5}:(0,\nu_{5}/\kappa_{2})$ corresponds to the line $f_{1}=0$.
By the transformation $(f,g)=(f_{1},f_{1}g_{1}+\nu_{5}/\kappa_{2})$, the operator $L_{1}$ is written as 
\begin{align}
    L_{1}=&-\frac{g_{1}\nu_{1}\nu_{2}\nu_{3}\nu_{4}(f_{1}g_{1}\kappa_{2}+\nu_{5}-\nu_{6})}{f_{1}g_{1}\kappa_{2}+\nu_{5}}+\frac{\nu_{1}\nu_{2}(q\nu_{3}-z)(q\nu_{4}-z)(f_{1}g_{1}\kappa_{2}+\nu_{5}-\kappa_{2}T_{z}^{-1})}{\kappa_{2}q(f_{1}q-z)} \nonumber \\
    &+\frac{(\nu_{1}\nu_{5}-\kappa_{2}+f_{1}g_{1}\kappa_{2}\nu_{1})(\nu_{2}\nu_{5}-\kappa_{2}+f_{1}g_{1}\kappa_{2}\nu_{2})z}{\kappa_{2}q(f_{1}g_{1}\kappa_{2}+\nu_{5})} \nonumber \\
    &-\frac{(-\kappa_{1}+\nu_{7}z)(-\kappa_{1}+\nu_{8}z)(-\kappa_{2}+( f_{1}g_{1}\kappa_{2}+\nu_{5})T_{z})}{q(f_{1}g_{1}\kappa_{2}+\nu_{5})\nu_{7}\nu_{8}(f_{1}-z)}. \nonumber 
\end{align}
We set $f_1=0$ on the equation $L_1 y(z)=0 $.
Then we have
\begin{align}
    &\nu_{1}\nu_{2}(z-q\nu_{3})(z-q\nu_{4})y(z/q)+\Bigl(z-\frac{\kappa_{1}}{\nu_{7}}\Bigr) \Bigl(z-\frac{\kappa_{1}}{\nu_{8}}\Bigr)y(qz) \label{eq:D5P5} \\
    &-\Bigl[ ( \nu_{1}+ \nu_{2} ) z^{2} +\Bigl\{ q g_{1}\nu_{1}\nu_{2}\nu_{3}\nu_{4} \Bigl( 1 -\frac{\nu_{6}}{\nu_{5}} \Bigr) -\frac{q\nu_{1}\nu_{2}\nu_{5}(\nu_{3} + \nu_{4} )}{\kappa_{2}}-\frac{\kappa_{1}\kappa_{2}(\nu_{7} +\nu_{8} )}{\nu_{5}\nu_{7}\nu_{8}}\Bigr \} z \nonumber \\
    &\qquad +\frac{q^{2}\nu_{1}\nu_{2}\nu_{3}\nu_{4}\nu_{5}}{\kappa_{2}}+\frac{{\kappa_{1}}^{2}\kappa_{2}}{\nu_{5}\nu_{7}\nu_{8}}\Bigr\}\Bigr]y(z)=0 . \nonumber 
\end{align}
It follows from the relation $\kappa_{1} ^2 \kappa_{2} ^2 = q \nu_{1} \nu_{2} \dots \nu_{8} $ that Eq.~(\ref{eq:D5P5}) is written as
\begin{align}
    &(z-q\nu_{3})(z-q\nu_{4})y(z/q)+\frac{1}{\nu_{1}\nu_{2}}\Bigl(z-\frac{\kappa_{1}}{\nu_{7}}\Bigr) \Bigl(z-\frac{\kappa_{1}}{\nu_{8}}\Bigr)y(qz) \nonumber \\
    &-\Bigl[\Bigl( \frac{1}{\nu_{1}}+\frac{1}{\nu_{2}}\Bigr)z^{2} +\Bigl\{ q g_{1}\nu_{3}\nu_{4} \Bigl( 1 -\frac{\nu_{6}}{\nu_{5}} \Bigr) -\frac{q\nu_{5}(\nu_{3} + \nu_{4} )}{\kappa_{2}}-\frac{\kappa_{1}\kappa_{2}(\nu_{7} +\nu_{8} )}{\nu_{1}\nu_{2}\nu_{5}\nu_{7}\nu_{8}}\Bigr \} z \nonumber \\
    &\qquad +\frac{q\kappa_{1}{\nu_{3}^{1/2}{\nu_{4}}^{1/2}}}{{\nu_{1}}^{1/2}{\nu_{2}}^{1/2}{\nu_{7}}^{1/2}{\nu_{8}}^{1/2}}\Bigl\{\Bigl(\frac{q\nu_{6}}{\nu_{5}}\Bigr)^{1/2}+\Bigl(\frac{q\nu_{6}}{\nu_{5}}\Bigr)^{-1/2}\Bigr\}\Bigr]y(z)=0 . \nonumber 
\end{align}
We compare it with the $q$-Heun equation given in Eq.~(\ref{eq:qHeunD15}).
Then we obtain the following correspondence
\begin{align}
    &h_{1}=q^{1/2}\nu_{3},\quad h_{2}=q^{1/2}\nu_{4},\quad h_{3}=\frac{q\nu_{6}}{\nu_{5}} \nonumber \\
    &l_{1}=\frac{q^{1/2}\kappa_{1}}{\nu_{7}},\quad l_{2}=\frac{q^{1/2}\kappa_{1}}{\nu_{8}},\quad l_{3}=\frac{1}{\nu_{1}},\quad l_{4}=\frac{1}{\nu_{2}}. \nonumber 
\end{align}
We may regard the parameter $g_1$ as an accessory parameter.

On the blow-up at the point $P_{6}:(0, \nu_{6}/\kappa_{2})$, we obtain the $q$-Heun equation where the parameters $\nu _5$ and $\nu _6$ are exchanged.

\subsubsection{The points $P_{7}:(\kappa_{1}/\nu_{7},0)$ and $P_{8}:(\kappa_{1}/\nu_{8},0)$} $ $

We realize the blow-up at $(f,g)=(\kappa_{1}/\nu_{7},0)$ by setting $(f,g)=(f_{1}g_{1}+\kappa_{1}/\nu_{7},g_{1})$.
Then the point $P_{7}:(\kappa_{1}/\nu_{7},0)$ corresponds to the line $g_{1}=0$.
By the transformation $(f,g)=(f_{1}g_{1}+\kappa_{1}/\nu_{7},g_{1})$, the operator $L_{1}$ is written as 
\begin{align}
    L_{1}=&-\frac{\nu_{1}\nu_{2}\nu_{3}\nu_{4}(g_{1}\kappa_{2}-\nu_{5})(g_{1}\kappa_{2}-\nu_{6})\nu_{7}}{g_{1}{\kappa_{2}}^{2}(\kappa_{1}+f_{1}g_{1}\nu_{7})}+\frac{(g_{1}\nu_{1}-1)(g_{1}\nu_{2}-1)z}{g_{1}q} \nonumber \\
    &+\frac{\nu_{1}\nu_{2}\nu_{7}(q\nu_{3}-z)(q\nu_{4}-z)(g_{1}-T_{z}^{-1})}{q(\kappa_{1}q+f_{1}g_{1}q\nu_{7}-\nu_{7}z)}-\frac{(\kappa_{1}-\nu_{7}z)(\kappa_{1}-\nu_{8}z)(g_{1}T_{z}-1)}{g_{1}q\nu_{8}(\kappa_{1}+f_{1}g_{1}\nu_{7}-\nu_{7}z)} . \nonumber 
\end{align}
We consider the equation $L_1 y(z)=0 $.
It follows from the relation $\kappa_{1} ^2 \kappa_{2} ^2 = q \nu_{1} \nu_{2} \dots \nu_{8} $ that divergence as $g_{1} \to 0$ in $L_1 y(z)=0 $ was avoided and we have
\begin{align}
    &-\frac{\kappa_{1}-\nu_{8}z}{q\nu_{8}}y(qz)-\frac{\nu_{1}\nu_{2}\nu_{7}(z-q\nu_{3})(z-q\nu_{4})}{q(\kappa_{1}q-\nu_{7}z)}y(z/q) \nonumber \\
    &+\Bigl\{\frac{\nu_{1}\nu_{2}\nu_{3}\nu_{4}\nu_{7}(\nu_{5} +\nu_{6})}{\kappa_{2}(\kappa_{1}-\nu_{7}z)} -\frac{\kappa_{1}( \nu_{1}+ \nu_{2} ) z}{q(\kappa_{1}-\nu_{7}z)}-\frac{\nu_{1}\nu_{2}\nu_{3}\nu_{4}(\nu_{5}+ \nu_{6} ) {\nu_{7}}^{2}z}{\kappa_{1}\kappa_{2}(\kappa_{1}-\nu_{7}z)} \nonumber \\
    &\qquad +\frac{f_{1}\nu_{7}z}{q(\kappa_{1}-\nu_{7}z)}-\frac{f_{1} {\nu_{7}}^{2}z}{q \nu _8 (\kappa_{1}-\nu_{7}z)}+\frac{(\nu_{1} + \nu_{2} ) \nu_{7}z^{2}}{q(\kappa_{1}-\nu_{7}z)}\Bigr\}y(z)=0 \nonumber 
\end{align}
by setting $g_1=0$.
To obtain the $q$-Heun equation, we transform the dependent variable $y(z)$ by setting $u(z)=y(z)/(\nu_{7}z - \kappa_{1})$.
Then it follows that
\begin{align}
    &(z-q\nu_{3})(z-q\nu_{4})u(z/q)+\frac{q^{2}}{\nu_{1}\nu_{2}}\Bigl(z-\frac{\kappa_{1}}{q\nu_{7}}\Bigr)\Bigl(z-\frac{\kappa_{1}}{\nu_{8}}\Bigr)u(qz) \nonumber \\
    & -\Bigl[\Bigl(\frac{q}{\nu_{1}}+\frac{q}{\nu_{2}}\Bigr)z^{2} +\Bigl\{ \frac{q f_{1}}{\nu_{1}\nu_{2}} \Bigl( 1-\frac{\nu_{7}}{\nu_{8}} \Bigr) -\frac{\kappa_{1}q}{\nu_{7}} \Bigl( \frac{1}{\nu_{1}} + \frac{1}{\nu_{2}} \Bigr) -\frac{q^{2}\nu_{3}\nu_{4} \nu_{7} (\nu_{5} + \nu_{6} ) }{\kappa_{1}\kappa_{2}} \Bigr\} z \nonumber \\
    &\qquad +\frac{q^{3/2}\kappa_{1}{\nu_{3}}^{1/2}{\nu_{4}}^{1/2}}{{\nu_{1}}^{1/2}{\nu_{2}}^{1/2}{\nu_{7}}^{1/2}{\nu_{8}}^{1/2}}\Bigl\{\Bigl(\frac{\nu_{6}}{\nu_{5}}\Bigr)^{1/2}+\Bigl(\frac{\nu_{6}}{\nu_{5}}\Bigr)^{-1/2}\Bigr\}\Big]u(z)=0 . \nonumber 
\end{align}
We compare it with the $q$-Heun equation given in Eq.~(\ref{eq:qHeunD15}).
Then we obtain the following correspondence
\begin{align}
    &h_{1}=q^{1/2}\nu_{3},\quad h_{2}=q^{1/2}\nu_{4},\quad h_{3}=\frac{\nu_{6}}{\nu_{5}} \nonumber \\
    &l_{1}=\frac{\kappa_{1}}{q^{1/2}\nu_{7}},\quad l_{2}=\frac{q^{1/2}\kappa_{1}}{\nu_{8}},\quad l_{3}=\frac{q}{\nu_{1}},\quad l_{4}=\frac{q}{\nu_{2}}. \nonumber 
\end{align}
We may regard $f_1$ as an accessory parameter.

On the blow-up at the point $P_{8}:(\kappa_{1}/\nu_{8},0)$, we obtain the $q$-Heun equation where the parameters $\nu _7$ and $\nu _8$ are exchanged.

\subsubsection{The points $P_{1}:(\infty,1/\nu_{1})$ and $P_{2}:(\infty,1/\nu_{2})$} $ $

We consider the blow-up at the point $P_1: (\infty,1/\nu_{1})$.
Set $(f,g)=(1/f_{0},g_{0})$. Then the point $(f,g) = (\infty,1/\nu_{1})$ corresponds to the point $(f_{0},g_{0})=(0 ,1/\nu_{1})$.
We realize the blow-up at $(f_{0},g_{0})=(0 ,1/\nu_{1})$ by setting $(f_{0},g_{0})=(f_{1},f_{1}g_{1}+1/\nu_{1})$.
Then the point $P_{1}:(\infty,\frac{1}{\nu_{1}})$ corresponds to the line $f_{1}=0$.
We substitute $(f,g)=(1/f_{0},g_{0})$ and $(f_{0},g_{0})=(f_{1},f_{1}g_{1}+1/\nu_{1})$ into the equation $L_1 y(z)=0 $.
We set $f_1=0$.
By applying the relation $\kappa_{1} ^2 \kappa_{2} ^2 = q \nu_{1} \nu_{2} \dots \nu_{8} $, we obtain 
\begin{align}
    &(z-q\nu_{3})(z-q\nu_{4})y(z/q)+\frac{q}{\nu_{1}\nu_{2}}\Bigl(z-\frac{\kappa_{1}}{\nu_{7}}\Bigr)\Bigl(z-\frac{\kappa_{1}}{\nu_{8}}\Bigr)y(qz) \nonumber \\
    &-\Bigl[\Bigl(\frac{1}{\nu_{1}}+\frac{q}{\nu_{2}}\Bigr)z^{2}+\Bigl\{ g_{1}q \Bigl( 1 -\frac{\nu_{1}}{\nu_{2}}\Bigr) - \frac{q}{\nu_{1}} ( \nu_{3} + \nu_{4} ) - \frac{\kappa_{1}q}{\nu_{2}} \Bigl( \frac{1}{\nu_{7}} + \frac{1}{\nu_{8}} \Bigr) \Bigr\} z \nonumber \\
& \qquad +\Bigl(\frac{q^{3}\nu_{3}\nu_{4}{\kappa_{1}}^{2}}{\nu_{1}\nu_{2}\nu_{7}\nu_{8}}\Bigr)^{1/2}\Bigl\{\Bigl(\frac{\nu_{6}}{\nu_{5}}\Bigr)^{1/2}+\Bigl(\frac{\nu_{6}}{\nu_{5}}\Bigr)^{-1/2}\Bigr\}\Bigr]y(z)=0 .\nonumber 
\end{align}
We compare it with the $q$-Heun equation given in Eq.~(\ref{eq:qHeunD15}).
Then we obtain the following correspondence
\begin{align}
    &h_{1}=q^{1/2}\nu_{3},\quad h_{2}=q^{1/2}\nu_{4},\quad h_{3}=\frac{\nu_{6}}{\nu_{5}} \nonumber \\
    &l_{1}=\frac{q^{1/2}\kappa_{1}}{\nu_{7}},\quad l_{2}=\frac{q^{1/2}\kappa_{1}}{\nu_{8}},\quad l_{3}=\frac{1}{\nu_{1}},\quad l_{4}=\frac{q}{\nu_{2}}. \nonumber 
\end{align}
We may regard $g_1$ as an accessory parameter.

On the blow-up at the point $P_2: (\infty,1/\nu_{2})$, we obtain the $q$-Heun equation where the parameters $\nu _1$ and $\nu _2$ are exchanged.

\subsubsection{The points $P_{3}:(\nu_{3},\infty)$ and $P_{4}:(\nu_{4},\infty)$} $ $

We consider the blow-up at the point $P_{3}:(\nu_{3},\infty)$.
Set $(f,g)=(f_{0},1/g_{0})$. Then the point $(f,g) = (\nu_{3},\infty)$ corresponds to the point $(f_{0},g_{0})=(\nu_{3},0)$.
We realize the blow-up at $(f_{0},g_{0})=(\nu_{3},0)$ by setting $(f_{0},g_{0})=(f_{1}g_{1}+\nu_{3},g_{1})$.
Then the point $P_{3}:(\nu_{3},\infty)$ corresponds to the line $g_{1}=0$.
We substitute $(f,g)=(f_{0},1/g_{0})$ and $(f_{0},g_{0})=(f_{1}g_{1}+\nu_{3},g_{1})$ into the equation $L_1 y(z)=0 $.
Set $g_{1}=0$.
Then we have
\begin{align}
    &-\frac{(\kappa_{1}-\nu_{7}z)(\kappa_{1}-\nu_{8}z)}{q\nu_{7}\nu_{8}(\nu_{3}-z)}y(qz)-\frac{\nu_{1}\nu_{2}(q\nu_{4}-z)}{q}y(z/q) \nonumber \\
    &+\frac{1}{\kappa_{2}q (q\nu_{3}-z)}\Bigl[ q^{2}\nu_{1}\nu_{2}{\nu_{3}}\nu_{4}(\nu_{5} +\nu_{6}) +\{ q f_{1}\kappa_{2} \nu_{1}\nu_{2} (1 - \nu_{4} / \nu _3 )  \nonumber \\
    & \qquad -q \kappa_{2} {\nu_{3}} (\nu_{1} +\nu_{2}) -q\nu_{1}\nu_{2}\nu_{4}(\nu_{5} + \nu_{6}) \} z+ \kappa_{2} (\nu_{1}+\nu_{2})z^{2}\Bigr] y(z)=0. \nonumber
\end{align}
To obtain the $q$-Heun equation, we set $u(z)=y(z)/(z- q\nu_{3})$.
Then 
\begin{align}
    &(z-q^{2}\nu_{3})(z-q\nu_{4})u(z/q)+\frac{q^{2}}{\nu_{1}\nu_{2}}\Bigl(z-\frac{\kappa_{1}}{\nu_{7}}\Bigr)\Bigl(z-\frac{\kappa_{1}}{\nu_{8}}\Bigr)u(qz) \nonumber \\
    &-\Bigl[q \Bigl( \frac{1}{\nu_{1}}+\frac{1}{\nu_{2}} \Bigr) z^{2}+ q^2 \Bigl\{ f_{1}\Bigl( 1 -\frac{\nu_{4}}{\nu_{3}} \Bigr) - \nu_{3} \Bigl( \frac{1}{\nu_{1}} + \frac{1}{\nu_{2}} \Bigr) - \frac{\nu_{4}}{\kappa_{2}} ( \nu_{5} + \nu_{6} ) \Bigr\} z \nonumber \\
& \qquad +\frac{q^{3}\nu_{3}\nu_{4}{\nu_{5}}^{1/2}{\nu_{6}}^{1/2}}{\kappa_{2}}\Bigl\{\Bigl(\frac{\nu_{6}}{\nu_{5}}\Bigr)^{1/2}+\Bigl(\frac{\nu_{6}}{\nu_{5}}\Bigr)^{-1/2}\Bigr\}
    \Bigr] u(z) =0 . \nonumber
\end{align}
We compare it with the $q$-Heun equation given in Eq.~(\ref{eq:qHeunD15}).
Then we obtain the following correspondence
\begin{align}
&h_{1}=q^{3/2}\nu_{3}, \quad h_{2}=q^{1/2}\nu_{4}, \quad h_{3}=\frac{\nu_{6}}{\nu_{5}}, \nonumber \\
&l_{1}=\frac{q^{1/2}\kappa_{1}}{\nu_{7}},\quad l_{2}=\frac{q^{1/2}\kappa_{1}}{\nu_{8}},\quad l_{3}=\frac{q}{\nu_{1}},\quad l_{4}=\frac{q}{\nu_{2}}. \nonumber 
\end{align}
We may regard $f_1$ as an accessory parameter.

On the blow-up at the point $P_4:(\nu_{4},\infty)$, we obtain the $q$-Heun equation where the parameters $\nu _3$ and $\nu _4$ are exchanged.

\subsubsection{The other cases related to the $q$-Heun equation} $ $

We can obtain the $q$-Heun equation by other restrictions of the parameters.
We substitute $f= \kappa_{1}/\nu_{7} $ to the equation $L_1 y(z)=0 $.
Then we have
\begin{align}
    &(z-q\nu_{3})(z-q\nu_{4})y(z/q)+\frac{1}{\nu_{1}\nu_{2}}\Bigl(z-\frac{q\kappa_{1}}{\nu_{7}}\Bigr)\Bigl(z-\frac{\kappa_{1}}{\nu_{8}}\Bigr)y(qz) -\Bigl[\Bigl(\frac{1}{\nu_{1}}+\frac{1}{\nu_{2}}\Bigr)z^{2} \nonumber \\
    & +\Bigl\{ \frac{q (\nu _3 \nu_7 - \kappa_{1} )(\nu _4 \nu_7 - \kappa_{1} )}{\nu_{7} \kappa_{1}}g -\frac{\kappa_{1}q}{\nu_{7}} \Bigl( \frac{1}{\nu_{1}} + \frac{1}{\nu_{2}} \Bigr) -\frac{q \nu_{3}\nu_{4} \nu_{7} (\nu_{5} + \nu_{6} ) }{\kappa_{1}\kappa_{2}} \Bigr\} z \nonumber \\
    &\qquad +\frac{q^{3/2}\kappa_{1}{\nu_{3}}^{1/2}{\nu_{4}}^{1/2}}{{\nu_{1}}^{1/2}{\nu_{2}}^{1/2}{\nu_{7}}^{1/2}{\nu_{8}}^{1/2}}\Bigl\{\Bigl(\frac{\nu_{6}}{\nu_{5}}\Bigr)^{1/2}+\Bigl(\frac{\nu_{6}}{\nu_{5}}\Bigr)^{-1/2}\Bigr\}\Big] y(z)=0 . \nonumber 
\end{align}
Hence we obtain the $q$-Heun equation, and we may regard $g$ as an accessory parameter.
We have a similar result by substituting $f= \kappa_{1}/\nu_{8} $ to the equation $L_1 y(z)=0 $.

We substitute $f= \nu_{3} $ to the equation $L_1 y(z)=0 $.
Then we have
\begin{align}
    &(z-\nu_{3})(z-q\nu_{4})y(z/q)+\frac{1}{\nu_{1}\nu_{2}}\Bigl(z-\frac{\kappa_{1}}{\nu_{7}}\Bigr)\Bigl(z-\frac{\kappa_{1}}{\nu_{8}}\Bigr)y(qz) \nonumber \\
    &-\Bigl[\Bigl(\frac{1}{\nu_{1}}+\frac{1}{\nu_{2}}\Bigr)z^{2} - \Bigl( \frac{ (\nu _3 \nu_7 - \kappa_{1} )(\nu _4 \nu_7 - \kappa_{1} )}{\nu_{1} \nu_{2} \nu_{3} \nu_{7} \nu_{7} }\frac{1}{g} +\frac{\nu_{3}}{\nu_{1}} + \frac{\nu_{3}}{\nu_{2}}+\frac{q\nu_{4}\nu_{5}}{\kappa_{2}}+\frac{q\nu_{4}\nu_{6}}{\kappa_{2}}\Bigr)z \nonumber \\
& \qquad +\frac{q \nu_{3}\nu_{4}{\nu_{5}}^{1/2}{\nu_{6}}^{1/2}}{\kappa_{2}}\Bigl\{\Bigl(\frac{\nu_{6}}{\nu_{5}}\Bigr)^{1/2}+\Bigl(\frac{\nu_{6}}{\nu_{5}}\Bigr)^{-1/2}\Bigr\}
    \Bigr] y(z) =0 . \nonumber
\end{align}
Hence we obtain the $q$-Heun equation, and we may regard $g$ as an accessory parameter.
Note that this procedure was essentially considered in section 3.1 of \cite{TakR}.
We have a similar result by substituting $f= \nu_{4} $ to the equation $L_1 y(z)=0 $.

\subsection{The case $E^{(1)}_{6}$} $ $

The $q$-Painlev\'e equation of type $E^{(1)}_{6} $ ($q$-$P(E^{(1)}_{6})$) in \cite{KNY} was given as
\begin{align}
 & \frac{(fg-1)(\overline{f}g-1)}{f\overline{f}}=\frac{(g- 1 / \nu_{1} ) (g- 1 / \nu_{2} ) (g- 1 / \nu_{3} ) (g- 1 / \nu_{4} )}{ (g- \nu_{5}/\kappa_{2} ) (g- \nu_{6}/\kappa_{2} )}, \nonumber \\
 & \frac{(fg-1)(f\underline{g}-1)}{g\underline{g}}=\frac{(f-\nu_{1}) (f-\nu_{2}) (f-\nu_{3}) (f-\nu_{4})}{ (f- \kappa_{1}/\nu_{7} )(f- \kappa_{1}/\nu_{8} )} . \nonumber
\end{align}
The space of initial condition for $q$-$P(E^{(1)}_{6})$ was realized by blowing up eight points $P_{1}, \dots , P_8 $ of ${\mathbb P^1}\times {\mathbb P^1}$, where 
\begin{equation}
P_{i}:\Bigl(\nu_{i},\frac{1}{\nu_{i}}\Bigr)_{i=1,2,3,4},\quad \Bigl(0,\frac{\nu_{i}}{\kappa_{2}}\Bigr)_{i=5,6},\quad \Bigl(\frac{\kappa_{1}}{\nu_{i}},0\Bigr)_{i=7,8} \, .
\nonumber 
\end{equation}
On the other hand, $q$-$P(E^{(1)}_{6})$ is obtained by the compatibility condition for the Lax operators $L_1$ and $L_2$, where
\begin{align}
    L_{1}=&\frac{ z (g\nu_{1}-1)(g\nu_{2}-1)(g\nu_{3}-1)(g\nu_{4}-1)}{g(fg-1) ( g z - q )}-\frac{( g\kappa_{2}/\nu_{5}-1 )( g\kappa_{2}/\nu_{6}-1 ){\kappa_{1}}^{2}}{q fg \nu_{7}\nu_{8}} \nonumber \\
& +\frac{(\nu_{1}- z/q )(\nu_{2}- z/q )(\nu_{3}- z/q )(\nu_{4}- z/q )}{f- z/q}\Bigl\{\frac{g}{1-g z/q}-T_{z}^{-1}\Bigr\} \nonumber \\
    &+\frac{( \kappa_{1}/\nu_{7}-z )( \kappa_{1}/\nu_{8}-z )}{q(f-z)}\Bigl\{\Bigl(\frac{1}{g}-z\Bigr)-T_{z}\Bigr\} \, , \nonumber  \\
    L_{2}=&\Bigl(1-\frac{f}{z}\Bigr)T+T_{z}-\Bigl(\frac{1}{g}-z\Bigr) \, . \nonumber 
\end{align}
We track the linear differential equation $L_1 y(z)= 0$ on the process of the blow-up of the point $P_i$ $(i=1,2,\dots ,8)$ respectively, and we investigate a relationship with the variant of the $q$-Heun equation of degree $3$;
\begin{align}
&(x-h_{1}{q}^{1/2})(x-h_{2}{q}^{1/2})(x-h_{3}{q}^{1/2})g(x/q)\label{eq:qHeunE16} \\
& +(x-l_{1}{q}^{-1/2})(x-l_{2}{q}^{-1/2})(x-l_{3}{q}^{-1/2})g(qx) \nonumber \\
&+\{-({q}^{1/2}+{q}^{-1/2})x^{3}+(h_{1}+h_{2}+h_{3}+l_{1}+l_{2}+l_{3})x^{2} \nonumber \\
&\qquad \qquad -Ex+(l_{1}l_{2}l_{3}h_{1}h_{2}h_{3})^{1/2}(h_{4}^{-1/2}+h_{4}^{1/2})\}g(x)=0 \, . \nonumber
\end{align}
Note that detailed calculation in the following subsections was performed in \cite{Ssk}.

\subsubsection{The points $P_{5}:(0, \nu_{5}/\kappa_{2})$ and $P_{6}:(0,\nu_{6}/\kappa_{2})$} $ $

We realize the blow-up at $(f,g)= (0, \nu_{5}/\kappa_{2})$ by setting $(f,g)=(f_{1}g_{1},g_{1}+\nu_{5}/\kappa_{2})$.
Then the point $P_{5}:(0, \nu_{5}/\kappa_{2})$ corresponds to the line $g_{1}=0$.
We substitute $(f,g)=(f_{1}g_{1},g_{1}+\nu_{5}/\kappa_{2})$ into the equation $L_1 y(z)=0 $, and set $g_{1}=0$.
Then we have
\begin{align}
&\Bigl(\frac{\kappa_{1}}{\nu_{7}}-z\Bigr)\Bigl(\frac{\kappa_{1}}{\nu_{8}}-z\Bigr)y(qz)+\frac{(q\nu_{1}-z)(q\nu_{2}-z)(q\nu_{3}-z)(q\nu_{4}-z)}{q^{2}}y(z/q) \label{eq:E6P5y} \\
&+\Bigl\{-\frac{{\kappa_{1}}^{2}{\kappa_{2}}^{2}(\nu_{5}-\nu_{6})}{f_{1}{\nu_{5}}^{2}\nu_{6}\nu_{7}\nu_{8}}z+\Bigl(\frac{\kappa_{1}}{\nu_{7}}-z\Bigr)\Bigl(\frac{\kappa_{1}}{\nu_{8}}-z\Bigr)z-\frac{\kappa_{2}(\frac{\kappa_{1}}{\nu_{7}}-z)(\frac{\kappa_{1}}{\nu_{8}}-z)}{\nu_{5}} \nonumber \\
&\qquad +\frac{\nu_{5}(q\nu_{1}-z)(q\nu_{2}-z)(q\nu_{3}-z)(q\nu_{4}-z)}{q(-\kappa_{2}q+\nu_{5}z)} \nonumber \\
&\qquad -\frac{(-\kappa_{2}+\nu_{1}\nu_{5})(-\kappa_{2}+\nu_{2}\nu_{5})(-\kappa_{2}+\nu_{3}\nu_{5})(-\kappa_{2}+\nu_{4}\nu_{5})qz^{2}}{{\kappa_{2}}^{2}\nu_{5}(-\kappa_{2}q+\nu_{5}z)}\Bigr\}y(z)=0 \, . \nonumber 
\end{align}
To obtain a variant of the $q$-Heun equation, we apply gauge transformations.
Set
\begin{equation}
(\alpha x;q)_{\infty}=\prod_{i=0}^{\infty} (1-q^{i}\alpha x)=(1-\alpha x)(1-q\alpha x)(1-q^{2}\alpha x)\cdots \nonumber
\end{equation}
\begin{prop} \label{prop:GT} $ $\\
(i) If $y(x)$ satisfies $q^{-\lambda}a(x)g(x/q)+b(x)g(x)+q^{\lambda}c(x)g(qx)=0$, then the function $h(x)=x^{\lambda}y(x)$ satisfies $a(x)g(x/q)+b(x)g(x)+c(x)g(qx)=0$.\\
(ii) If $y(x)$ satisfies $(1-\alpha x)a(x)g(x/q)+b(x)g(x)+c(x)g(qx)=0$, then the function $u(x)=(\alpha qx;q)_{\infty}y(x)$ satisfies $a(x)g(x/q)+b(x)g(x)+(1-\alpha qx)c(x)g(qx)=0$.
\end{prop}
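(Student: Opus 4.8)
The plan is to verify both statements by direct substitution into the given $q$-difference equations, using only the elementary shift behavior of the monomial $x^\lambda$ and of the $q$-Pochhammer symbol $(\alpha x;q)_\infty$ under $x\mapsto qx$ and $x\mapsto x/q$. No deeper structure is needed; the content is entirely bookkeeping of how the gauge factor interacts with the three coefficient functions $a(x)$, $b(x)$, $c(x)$.

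For part (i), I would start from the hypothesis that $y(x)$ solves
\[
q^{-\lambda}a(x)\,y(x/q)+b(x)\,y(x)+q^{\lambda}c(x)\,y(qx)=0,
\]
substitute $y(x)=x^{-\lambda}h(x)$, and multiply through by $x^{\lambda}$. The point is that $y(x/q)=(x/q)^{-\lambda}h(x/q)=q^{\lambda}x^{-\lambda}h(x/q)$ and $y(qx)=q^{-\lambda}x^{-\lambda}h(qx)$, so the factors $q^{\lambda}$ and $q^{-\lambda}$ produced by the monomial exactly cancel the prefactors $q^{-\lambda}$ and $q^{\lambda}$ attached to $a(x)$ and $c(x)$, leaving $a(x)h(x/q)+b(x)h(x)+c(x)h(qx)=0$. (One checks consistency with the statement as written, which has $y$ satisfying the $q^{\mp\lambda}$-weighted equation and $h(x)=x^{\lambda}y(x)$; the computation is symmetric under $\lambda\leftrightarrow-\lambda$, so the same cancellation applies in whichever direction the exponent is attached.)

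For part (ii), I would use the two identities
\[
(\alpha q x;q)_{\infty}=\frac{(\alpha x;q)_{\infty}}{1-\alpha x},\qquad
(\alpha x;q)_{\infty}=\frac{(\alpha x/q \cdot q;q)_\infty\cdot(1-\alpha x/q)}{1},
\]
more precisely $(\alpha q x;q)_\infty (1-\alpha x)=(\alpha x;q)_\infty$ and $(\alpha q x;q)_\infty=(1-\alpha q x)(\alpha q^2 x;q)_\infty$, together with $(\alpha q x;q)_\infty\big|_{x\to x/q}=(\alpha x;q)_\infty$ and $(\alpha q x;q)_\infty\big|_{x\to q x}=(\alpha q^2 x;q)_\infty$. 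Writing $y(x)=u(x)/(\alpha q x;q)_\infty$ and substituting into $(1-\alpha x)a(x)y(x/q)+b(x)y(x)+c(x)y(qx)=0$, then multiplying through by $(\alpha q x;q)_\infty$, the $u(x/q)$ term acquires the factor $(1-\alpha x)(\alpha q x;q)_\infty/(\alpha x;q)_\infty=1$, the $u(x)$ term keeps $b(x)$, and the $u(qx)$ term acquires $(\alpha q x;q)_\infty/(\alpha q^2 x;q)_\infty=1-\alpha q x$. This yields $a(x)u(x/q)+b(x)u(x)+(1-\alpha q x)c(x)u(qx)=0$, as claimed.

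The only mild subtlety — and the step I would flag as needing care rather than being genuinely hard — is the manipulation of the infinite product in part (ii): one must track which argument ($\alpha x$, $\alpha q x$, or $\alpha q^2 x$) appears after each shift and confirm that every ratio of Pochhammer symbols telescopes to a finite rational factor, so that no convergence or formal-power-series issue arises. Since the ratios in question are always of the form $(\beta;q)_\infty/(\beta q;q)_\infty=1-\beta$, this is purely formal and valid as an identity of (say) formal or meromorphic objects, so the proposition follows.
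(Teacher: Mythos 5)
Your proof is correct and takes essentially the same route as the paper: part (i) is the identical direct substitution using $y(x/q)=q^{\lambda}x^{-\lambda}h(x/q)$ and $y(qx)=q^{-\lambda}x^{-\lambda}h(qx)$ so that the $q^{\mp\lambda}$ prefactors cancel, and your explicit Pochhammer computation for part (ii) is precisely what the paper leaves implicit by saying (ii) follows ``similarly'' with a reference to \cite{MST}. The only blemish is the garbled first display in your part (ii), which as written asserts $(\alpha x;q)_{\infty}=(\alpha x;q)_{\infty}(1-\alpha x/q)$ and is false; however, the ``more precisely'' identities you actually use, namely $(\alpha x;q)_{\infty}=(1-\alpha x)(\alpha qx;q)_{\infty}$ and its shifts, are correct, so the argument stands.
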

\begin{proof}
If $h(x)=x^{\lambda}y(x)$, then $y(qx) = q^{-\lambda} x^{-\lambda} h(qx)$ and $y(x/q) = q^{\lambda} x^{-\lambda} h(x/q)$.
Hence the equation $q^{-\lambda}a(x) h(x/q)+b(x)g(x)+q^{\lambda}c(x) h (qx)=0$ is rewritten as $a(x)y(x/q)+b(x)y(x)+c(x)y(qx)=0$ and we obtain (i).

We can obtain (ii) similarly, as shown in \cite{MST}.
\end{proof}
We apply a gauge transformation in Proposition \ref{prop:GT} (ii) to Eq.~(\ref{eq:E6P5y}).
Then we obtain 
\begin{align}
&\frac{q^{1/2}}{\nu_{1}}(\nu_{1}-z)\Bigl(\frac{\kappa_{1}}{\nu_{7}}-z\Bigr)\Bigl(\frac{\kappa_{1}}{\nu_{8}}-z\Bigr)\tilde{y}(qz)+\frac{\nu_{1}}{q^{1/2}}(q\nu_{2}-z)(q\nu_{3}-z)(q\nu_{4}-z)\tilde{y}(z/q) \nonumber \\
&+q^{1/2} \Bigl\{-\frac{q^{2}\nu_{1}\nu_{2}\nu_{3}\nu_{4}\nu_{5}}{\kappa_{2}}-\frac{{\kappa_{1}}^{2}\kappa_{2}}{\nu_{5}\nu_{7}\nu_{8}}+\Bigl(\frac{q\nu_{1}\nu_{2}\nu_{3}\nu_{5}}{\kappa_{2}}+\frac{q\nu_{1}\nu_{2}\nu_{4}\nu_{5}}{\kappa_{2}}+\frac{q\nu_{1}\nu_{3}\nu_{4}\nu_{5}}{\kappa_{2}} \nonumber \\
& +\frac{q\nu_{2}\nu_{3}\nu_{4}\nu_{5}}{\kappa_{2}} -\frac{q\nu_{1}\nu_{2}\nu_{3}\nu_{4}{\nu_{5}}^{2}}{{\kappa_{2}}^{2}}+\frac{\kappa_{1}\kappa_{2}}{\nu_{5}\nu_{7}}+\frac{\kappa_{1}\kappa_{2}}{\nu_{5}\nu_{8}}+\frac{{\kappa_{1}}^{2}}{\nu_{7}\nu_{8}}+\frac{{\kappa_{1}}^{2}{\kappa_{2}}^{2}}{f_{1}{\nu_{5}}^{2}\nu_{7}\nu_{8}}-\frac{{\kappa_{1}}^{2}{\kappa_{2}}^{2}}{f_{1}\nu_{5}\nu_{6}\nu_{7}\nu_{8}}\Bigr)z \nonumber \\
&-\Bigl(\nu_{1}+\nu_{2}+\nu_{3}+\nu_{4} +\frac{\kappa_{1}}{\nu_{7}}+\frac{\kappa_{1}}{\nu_{8}}\Bigr)z^{2}+\Bigl(1+\frac{1}{q}\Bigr)z^{3}\Bigr\}\tilde{y}(z)=0 \, . \nonumber 
\end{align}
By applying the relation $\kappa_{1} ^2 \kappa_{2} ^2 = q \nu_{1} \nu_{2} \dots \nu_{8} $ and a gauge transformation in Proposition \ref{prop:GT} (i), we have
\begin{align}
  &(z-q\nu_{2})(z-q\nu_{3})(z-q\nu_{4}) u(z/q)+(z-\nu_{1})\Bigl(z-\frac{\kappa_{1}}{\nu_{7}}\Bigr)\Bigl(z-\frac{\kappa_{1}}{\nu_{8}}\Bigr) u(qz) \label{eq:E6P5u} \\
  &+\Bigl[-(q^{1/2}+q^{-1/2})z^{3} +q^{1/2} \Bigl(\nu_{1}+\nu_{2}+\nu_{3}+\nu_{4}+\frac{\kappa_{1}}{\nu_{7}}+\frac{\kappa_{1}}{\nu_{8}}\Bigr)z^{2} \nonumber \\
  & \qquad -q^{3/2} \Bigl\{ \nu_1 \nu_2 \nu_3 \nu_4 \Bigl(  \frac{ \nu_6 }{ \nu_5} - 1 \Bigr) \Bigl( \frac{1}{f_{1}} + \frac{{\nu_{5}}^{2}}{{\kappa_{2}}^{2}} \Bigr) + \nu_{1}\nu_{2}\nu_{3}\nu_{4}\frac{\nu_{5}}{\kappa_{2}}\Bigl( \frac{1}{\nu_1} +\frac{1}{\nu_2} +\frac{1}{\nu_3} +\frac{1}{\nu_4} \Bigr) \nonumber \\
  & \qquad +\frac{\kappa_{2}}{q\nu_{5}} \Bigl( \frac{\kappa_{1}}{\nu_{7}} +\frac{\kappa_{1}}{\nu_{8}}\Bigr) \Bigr\} z +\Bigl(q^{3}\frac{\nu_{1}\nu_{2}\nu_{3}\nu_{4}{\kappa_{1}}^{2}}{\nu_{7}\nu_{8}}\Bigr)^{1/2}\Bigl\{\Bigl(\frac{q\nu_{5}}{\nu_{6}}\Bigr)^{1/2}+\Bigl(\frac{q\nu_{5}}{\nu_{6}}\Bigr)^{-1/2}\Bigr\}\Bigr] u(z)=0 \, . \nonumber 
\end{align}
In fact, if $y(z)$ is a solution of Eq.~(\ref{eq:E6P5y}) and the parameter $\lambda $ satisfies $q^{\lambda } = q^{1/2}/ \nu _1 $, then the function $u(z)= z^{\lambda} (z/\nu _1; q)_{\infty } y(z)$ satisfies Eq.~(\ref{eq:E6P5u}).
We compare Eq.~(\ref{eq:E6P5u}) with the variant of the $q$-Heun equation of degree $3$ given in Eq.~(\ref{eq:qHeunE16}).
Then we obtain the following correspondence
\begin{align}
&h_{1}=q^{1/2}\nu_{2},\quad h_{2}=q^{1/2}\nu_{3},\quad h_{3}=q^{1/2}\nu_{4},\quad h_{4}=\frac{q\nu_{5}}{\nu_{6}}, \nonumber \\
&l_{1}=q^{1/2}\nu_{1},\quad l_{2}=\frac{q^{1/2}\kappa_{1}}{\nu_{7}},\quad l_{3}=\frac{q^{1/2}\kappa_{1}}{\nu_{8}} .\nonumber 
\end{align}
We may regard $f_1$ as an accessory parameter.

On the blow-up at the point $P_{6}$, we obtain the equation where the parameters $\nu _5$ and $\nu _6$ are exchanged.

\subsubsection{The points $P_{7}:(\kappa_{1}/\nu_{7},0)$ and $P_{8}:(\kappa_{1}/\nu_{8},0)$} $ $

We realize the blow-up at $(f,g)= (\kappa_{1}/\nu_{7},0)$ by setting $(f,g)=(f_{1}g_{1}+\kappa_{1}/\nu_{7},g_{1})$.
Then the point $P_{7}:(\kappa_{1}/\nu_{7},0)$ corresponds to the line $g_{1}=0$.
We substitute $(f,g)=(f_{1}g_{1}+\kappa_{1}/\nu_{7},g_{1})$ into the equation $L_1 y(z)=0 $, and set $g_{1}=0$.
Then we have
\begin{align}
 & \frac{z-\kappa_{1}/\nu_{8}}{q} y(qz) +\frac{(q\nu_{1}-z)(q\nu_{2}-z)(q\nu_{3}-z)(q\nu_{4}-z)}{q^{3}(z -q \kappa_{1}/ \nu_{7} )} y(z/q) \nonumber \\
  &+\Bigl[ \frac{{\kappa_{1}}^{2}q\{ - \kappa_{2}(\nu_{5}+\nu_{6}) + \nu_{5}\nu_{6}(1 -\nu_{8} /\nu_{7} )z \}}{q^{2}\nu_{5}\nu_{6}\nu_{7}\nu_{8}(z -\kappa_{1}/\nu _7 )} \nonumber \\
  &+\frac{z\{f_{1}q(\nu_{7}-\nu_{8})+\nu_{8}z( (q +1 ) z-q(\nu_{1}+\nu_{2}+\nu_{3}+\nu_{4}) )\}}{q^{2}\nu_{8}( z -\kappa_{1}/{\nu_{7}} )} \nonumber \\
  &+\frac{\kappa_{1}z\{\kappa_{2}q(\nu_{5}+\nu_{6})\nu_{7}+\nu_{5}\nu_{6}(-(q \nu_{7}+ \nu_{8})z+q(\nu_{1}+\nu_{2}+\nu_{3}+\nu_{4}) \nu_{8} )\}}{q^{2}\nu_{5}\nu_{6}\nu_{7}\nu_{8}(z -\kappa_{1}/\nu_{7})} \Bigr] y(z) =0 \, . \nonumber 
\end{align}
By setting $\tilde{y}(z)=y(z)/(z-\kappa_{1}/\nu_{7})$, we have
\begin{align}
&\frac{(z-\kappa_{1}/\nu_{8})(qz-\kappa_{1}/\nu_{7})}{q}\tilde{y}(qz)+\frac{(q\nu_{1}-z)(q\nu_{2}-z)(q\nu_{3}-z)(q\nu_{4}-z)}{q^{4}}\tilde{y}(z/q) \nonumber \\
  &+\Bigl[ \frac{{\kappa_{1}}^{2}q\{ - \kappa_{2}(\nu_{5}+\nu_{6}) + \nu_{5}\nu_{6}(1 -\nu_{8} /\nu_{7} )z \}}{q^{2}\nu_{5}\nu_{6}\nu_{7}\nu_{8}} \nonumber \\
  &+\frac{z\{f_{1}q(\nu_{7}-\nu_{8})+\nu_{8}z( (q +1 ) z-q(\nu_{1}+\nu_{2}+\nu_{3}+\nu_{4}) )\}}{q^{2}\nu_{8}} \nonumber \\
  &+\frac{\kappa_{1}z\{\kappa_{2}q(\nu_{5}+\nu_{6})\nu_{7}+\nu_{5}\nu_{6}(-(q \nu_{7}+ \nu_{8})z+q(\nu_{1}+\nu_{2}+\nu_{3}+\nu_{4}) \nu_{8} )\}}{q^{2}\nu_{5}\nu_{6}\nu_{7}\nu_{8}} \Bigr] \tilde{y}(z)=0 \, .\nonumber 
\end{align}
Let $\lambda $ be the parameter such that $q^{\lambda } = q^{3/2}/ \nu _1 $, then the function $u(z)= z^{\lambda} (z/\nu _1; q)_{\infty } \tilde{y}(z)$ satisfies 
\begin{align}
&(z-q\nu_{2})(z-q\nu_{3})(z-q\nu_{4})u(z/q)+(z-\nu_{1})\Bigl(z-\frac{\kappa_{1}}{\nu_{7}q}\Bigr)\Bigl(z-\frac{\kappa_{1}}{\nu_{8}}\Bigr)u(qz) \nonumber \\
&+\Bigl[-(q^{1/2}+q^{-1/2})z^{3} + q^{1/2}\Bigl(\nu_{1}+\nu_{2}+\nu_{3}+\nu_{4}+\frac{\kappa_{1}}{\nu_{7}q}+\frac{\kappa_{1}}{\nu_{8}}\Bigr)z^{2}\nonumber \\
& -q^{1/2}\Bigl\{ \Bigl( f_{1} + \frac{{\kappa_{1}}^{2}}{{\nu_{7}}^{2}} \Bigr)  \Bigl( \frac{\nu_{7}}{\nu_{8}} -1  \Bigr)  +\frac{\kappa_{1}}{\nu_{7}} ( \nu_{1} +  \nu_{2} +  \nu_{3} +  \nu_{4} ) +\frac{\kappa_{1}}{\nu_{8}} \Bigl( \frac{\kappa_{2}}{\nu_{5}} +\frac{\kappa_{2}}{\nu_{6} } \Bigr) \Bigr\} z \nonumber \\
& +\Bigl(\frac{q^{2}{\kappa_{1}}^{2}\nu_{1}\nu_{2}\nu_{3}\nu_{4}}{\nu_{7}\nu_{8}}\Bigr)^{1/2}\Bigl\{\Bigl(\frac{\nu_{5}}{\nu_{6}}\Bigr)^{1/2} +\Bigl(\frac{\nu_{5}}{\nu_{6}}\Bigr)^{-1/2}\Bigr\}\Bigr]u(z)=0 \, .\nonumber 
\end{align}
We compare it with the variant of the $q$-Heun equation of degree $3$ given in Eq.~(\ref{eq:qHeunE16}).
Then we obtain the following correspondence
\begin{align}
& h_{1}=q^{1/2}\nu_{1},\quad h_{2}=q^{1/2}\nu_{3},\quad h_{3}=q^{1/2}\nu_{4},\quad h_{4}=\frac{\nu_{5}}{\nu_{6}}, \nonumber \\
&l_{1}=q^{1/2}\nu_{1},\quad l_{2}=\frac{\kappa_{1}}{q^{1/2}\nu_{7}},\quad l_{3}=\frac{q^{1/2}\kappa_{1}}{\nu_{8}} . \nonumber 
\end{align}
We may regard $f_1$ as an accessory parameter.

On the blow-up at the point $P_{8}$, we obtain the equation where the parameters $\nu _7$ and $\nu _8$ are exchanged.

\subsubsection{The points $P_{i}:(\nu_{i},1/\nu_{i})$ $(i =1,2,3,4 )$} $ $

We realize the blow-up at $(f,g)= (\nu_{1},1/\nu_{1})$ by setting $(f,g)=(f_{1}g_{1}+\nu_{1},g_{1}+1/\nu_{1})$.
Then the point $P_{1}:(\nu_{1},1/\nu_{1})$ corresponds to the line $g_{1}=0$.
We substitute $(f,g)=(f_{1}g_{1}+\nu_{1},g_{1}+1/\nu_{1})$ into the equation $L_1 y(z)=0 $, and set $g_{1}=0$.
Then we have
\begin{align}
&\frac{(z- \kappa_{1}/\nu_{7})(z-\kappa_{1}/\nu_{8})}{q(z -\nu_{1})}y(qz)+\frac{(z-q\nu_{2})(z-q\nu_{3})(z- q\nu_{4})}{q^{3}}y(z/q) \nonumber \\
& +\Bigl\{-\frac{{\kappa_{1}}^{2}(1-\kappa_{2}/(\nu_{1}\nu_{5}))(1-\kappa_{2}/(\nu_{1}\nu_{6}))}{q\nu_{7}\nu_{8}} +\frac{(z- \kappa_{1}/\nu_{7})(z- \kappa_{1}/\nu_{8} )}{q} \nonumber \\
& +\frac{\nu_{1}(\nu_{2}-\nu_{1})(\nu_{3}-\nu_{1})(\nu_{4}-\nu_{1})z}{(f_{1}+{\nu_{1}}^{2})(z-q\nu_{1})} -\frac{q(\nu_{2}- z/q )(\nu_{3}- z/q )(\nu_{4}- z/q )}{z-q\nu_{1}}\Bigr\}y(z)=0 \, . \nonumber 
\end{align}
Let $\lambda $ be the parameter such that $q^{\lambda } = q^{1/2}/ \nu _1 $, then the function $u(z)= z^{\lambda} (z/\nu _1; q)_{\infty } y (z)$ satisfies 
\begin{align}
&(z-q\nu_{2})(z-q\nu_{3})(z-q\nu_{4})u(z/q)+(z-q\nu_{1})\Bigl(z-\frac{\kappa_{1}}{\nu_{7}}\Bigr)\Bigl(z-\frac{\kappa_{1}}{\nu_{8}}\Bigr)u(qz) \nonumber  \\
&+\Bigl[-(q^{-1/2}+q^{1/2})z^{3} +q^{1/2} \Bigl(q \nu_{1}+ \nu_{2}+ \nu_{3}+ \nu_{4}+\frac{\kappa_{1}}{\nu_{7}}+\frac{\kappa_{1}}{\nu_{8}}\Bigr)z^{2}\nonumber \\
&\quad -q^{3/2}\Bigl\{ \frac{\nu_{1} (\nu _2 -\nu _1) (\nu _3 -\nu _1) (\nu _4 -\nu _1)} {f_{1}+{\nu_{1}}^{2}} +\nu _1 \Bigl( \frac{\kappa_{1} }{\nu _7} +  \frac{\kappa_{1} }{\nu _8} \Bigr) \nonumber \\
& \qquad \qquad + \nu_{2}\nu_{3}\nu_4 \Bigl( - \frac{1}{\nu _1} + \frac{1}{\nu _2} + \frac{1}{\nu _3} + \frac{1}{\nu _4} + \frac{\nu _5}{\kappa _2}+ \frac{\nu _6}{\kappa _2} \Bigr)  \Bigr\} z \nonumber \\
&\quad +\Bigl(q^{4}\frac{\nu_{1}\nu_{2}\nu_{3}\nu_{4}{\kappa_{1}}^{2}}{\nu_{7}\nu_{8}}\Bigr)^{1/2}\Bigl\{\Bigl(\frac{\nu_{5}}{\nu_{6}}\Bigr)^{1/2}+\Bigl(\frac{\nu_{5}}{\nu_{6}}\Bigr)^{-1/2}\Bigr\}\Bigr]u(z)=0 \, . \nonumber
\end{align}
We compare it with the variant of the $q$-Heun equation of degree $3$ given in Eq.~(\ref{eq:qHeunE16}).
Then we obtain the following correspondence
\begin{align}
&h_{1}=q^{1/2}\nu_{2},\quad h_{2}=q^{1/2}\nu_{3}\quad h_{3}=q^{1/2}\nu_{4},\quad h_{4}=\frac{\nu_{5}}{\nu_{6}}, \nonumber \\
& l_{1}=q^{3/2}\nu_{1},\quad l_{2}=\frac{q^{1/2}\kappa_{1}}{\nu_{7}},\quad l_{3}=\frac{q^{1/2}\kappa_{1}}{\nu_{8}} . \nonumber 
\end{align}
We may regard $f_1$ as an accessory parameter.

On the blow-up at the point $P_{i}$ $(i=2,\, 3,\, 4)$, we obtain the equation where the parameters $\nu _1$ and $\nu _i$ are exchanged.

\subsubsection{The other cases related to the variant of the $q$-Heun equation of degree three} $ $

We substitute $f= \kappa_{1}/\nu_{7} $ to the equation $L_1 y(z)=0 $.
Then we have
\begin{align}
& (z-q\kappa_{1}/\nu_{7})(z-\kappa_{1}/\nu_{8}) y(qz)+\frac{(q\nu_{1}-z)(q\nu_{2}-z)(q\nu_{3}-z)(q\nu_{4}-z)}{q^2} y(z/q) \nonumber \\
&+\Bigl[(1+1/q)z^{3} - \Bigl( \nu_{1}+ \nu_{2}+ \nu_{3}+ \nu_{4}+\frac{q\kappa_{1}}{\nu_{7}}+\frac{\kappa_{1}}{\nu_{8}}\Bigr)z^{2}\nonumber \\
&\quad + q \Bigl\{ \frac{(1- \nu_{1}\nu_{7}/\kappa_{1})(1- \nu_{2}\nu_{7}/\kappa_{1})(1- \nu_{3}\nu_{7}/\kappa_{1})(1- \nu_{4}\nu_{7}/\kappa_{1})} {(g - \nu_{7}/\kappa_{1})\nu_{7}/\kappa_{1}}  +\frac{\kappa _1^2}{\nu _7 \nu _8} \nonumber \\
& \qquad   + \frac{\nu_{1}\nu_{2}\nu_{3}\nu_4 \nu_7 }{\kappa _1} \Bigl( - \frac{1}{\nu _1} - \frac{1}{\nu _2} - \frac{1}{\nu _3} - \frac{1}{\nu _4} + \frac{\nu _5}{\kappa _2}+ \frac{\nu _6}{\kappa _2}+ \frac{\nu _7}{\kappa _1} \Bigr) + \sum _{1\leq i <j \leq 4}\nu _i \nu _j  \Bigr\} z \nonumber \\
&\quad -\frac{q \kappa_{1}^2 \kappa_{2} (\nu_5+\nu_6)}{\nu_5 \nu_6 \nu_7 \nu_8 } \Bigr\}\Bigr] y (z)=0 \, . \nonumber
\end{align}
By applying a gauge transformation, we obtain the variant of the $q$-Heun equation of degree $3$, and we may regard $g$ as an accessory parameter.
We have a similar result by substituting $f= \kappa_{1}/\nu_{8} $ to the equation $L_1 y(z)=0 $.

We substitute $f= \nu_{1} $ to the equation $L_1 y(z)=0 $.
Then we have
\begin{align}
& q (z- \kappa_{1}/\nu_{7})(z-\kappa_{1}/\nu_{8}) y(qz)+\frac{(z- \nu_{1})(z-q\nu_{2})(z-q\nu_{3})(z- q\nu_{4})}{q}y(z/q) \nonumber \\
&+\Bigl[(q +1 )z^{3} - \Bigl(\nu_{1}+ q\nu_{2}+ q\nu_{3}+ q\nu_{4}+\frac{q\kappa_{1}}{\nu_{7}}+\frac{q\kappa_{1}}{\nu_{8}}\Bigr)z^{2}\nonumber \\
&\quad + q \Bigl\{ \frac{ - (\nu_1 -\kappa_{1} /\nu _7) ( \nu_1-\kappa_{1} /\nu _8 )}{ g \nu_1 } +\nu _1 ( \nu _2 + \nu _3 + \nu _4 ) \nonumber \\
& \qquad \qquad + \frac{\kappa_{1}^2}{\nu _7 \nu _8} + \frac{\kappa_{1}^2}{\nu _1 \nu _7 \nu _8} \Bigl( \frac{\kappa _2}{\nu _5}+ \frac{\kappa _2}{\nu _6} \Bigr) \Bigr\} z -\frac{q^2 \nu_1 \nu_2 \nu_3 \nu_4 (\nu_5+\nu_6)}{\kappa_{2}} \Bigr\}\Bigr] y (z)=0 \, . \nonumber
\end{align}
By applying a gauge transformation, we obtain the variant of the $q$-Heun equation of degree $3$, and we may regard $g$ as an accessory parameter.
We have similar results by substituting $f= \nu_{i} $ $(i = 2,3,4)$  to the equation $L_1 y(z)=0 $.

\subsection{The case $E^{(1)}_{7}$} $ $ 

The $q$-Painlev\'e equation of type $E^{(1)}_{7} $ ($q$-$P(E^{(1)}_{7})$) in \cite{KNY} was given as
\begin{align}
& \frac{(fg- \kappa_{1}/\kappa_{2} ) (\overline{f}g- \kappa_{1}/(q\kappa_{2}) )}{(fg-1)(\overline{f}g-1)}=\frac{(g- \nu_{5}/\kappa_{2} ) (g- \nu_{6}/\kappa_{2} ) (g- \nu_{7}/\kappa_{2} ) (g- \nu_{8}/\kappa_{2} )}{ (g - 1/\nu_{1} )(g - 1/\nu_{2} )(g - 1/\nu_{3} )(g - 1/\nu_{4} )}, \nonumber \\
& \frac{(fg-\kappa_{1}/\kappa_{2} ) (f\underline{g}- q\kappa_{1}/\kappa_{2} )}{(fg-1)(f\underline{g}-1)}=\frac{(f- \kappa_{1}/\nu_{5})(f- \kappa_{1}/\nu_{6})(f- \kappa_{1}/\nu_{7})(f- \kappa_{1}/\nu_{8})}{(f-\nu_{1})(f-\nu_{2})(f-\nu_{3})(f-\nu_{4})}  . \nonumber 
\end{align}
The space of initial condition for $q$-$P(E^{(1)}_{7})$ was realized by blowing up eight points $P_{1}, \dots , P_8 $ of ${\mathbb P^1}\times {\mathbb P^1}$, where 
\begin{equation}
    P_{i}:\Bigl(\nu_{i},\frac{1}{\nu_{i}}\Bigr)_{i=1,2,3,4},\quad \Bigl(\frac{\kappa_{1}}{\nu_{i}},\frac{\nu_{i}}{\kappa_{2}}\Bigr)_{i=5,6,7,8} \, .
\nonumber 
\end{equation}
On the other hand, $q$-$P(E^{(1)}_{7})$ is obtained by the compatibility condition for the Lax operators $L_1$ and $L_2$, where
\begin{align}
& L_{1}=\frac{q(\kappa_{1}-\kappa_{2})(g\kappa_{2}-\nu_{5})(g\kappa_{2}-\nu_{6})(g\kappa_{2}-\nu_{7})(g\kappa_{2}-\nu_{8})}{g\kappa_{1}{\kappa_{2}}^{2}(fg\kappa_{2}-\kappa_{1})(g\kappa_{2}z-\kappa_{1})} \nonumber \\
& \quad -\frac{q(\kappa_{1}-\kappa_{2})(g\nu_{1}-1)(g\nu_{2}-1)(g\nu_{3}-1)(g\nu_{4}-1)}{g(fg-1)\kappa_{1}\nu_{1}\nu_{2}\nu_{3}\nu_{4}(gz-q)} \nonumber \\
& \quad +\frac{(q\nu_{1}-z)(q\nu_{2}-z)(q\nu_{3}-z)(q\nu_{4}-z)\{(g\kappa_{2}z-\kappa_{1}q)- \kappa_{1} (gz-q)T_{z}^{-1}\}}{q \kappa_{1}\nu_{1}\nu_{2}\nu_{3}\nu_{4}(fq-z)z^{2}(q-gz)} \nonumber \\
& \quad -\frac{q(\kappa_{1}-\nu_{5}z)(\kappa_{1}-\nu_{6}z)(\kappa_{1}-\nu_{7}z)(\kappa_{1}-\nu_{8}z)\{\kappa_{1}(gz-1)-(g\kappa_{2}z-\kappa_{1})T_{z}\}}{{\kappa_{1}}^{4}(f-z)z^{2}(g\kappa_{2}z -\kappa_{1})} \, , \nonumber \\
& L_{2}=(1-zg \kappa_{2}/\kappa_{1})T_{z}-(1-zg)+z(z-f)gT \, .\nonumber 
\end{align}
We track the linear differential equation $L_1 y(z)= 0$ on the process of the blow-up of the point $P_i$ $(i=1,2,\dots ,8)$ respectively, and we investigate a relationship with the variant of the $q$-Heun equation of degree $4$;
\begin{align}
&(x-h_{1}q^{1/2})(x-h_{2}q^{1/2})(x-h_{3}q^{1/2})(x-h_{4}q^{1/2})g(x/q) \label{eq:qHeunE17} \\
&+(x-l_{1}q^{-1/2})(x-l_{2}q^{-1/2})(x-l_{3}q^{-1/2})(x-l_{4}q^{-1/2})g(qx)\nonumber \\
&+\Bigl[-(q^{1/2}+q^{-1/2})x^{4}+(h_{1}+h_{2} +h_{3}+h_{4}+l_{1}+l_{2}+l_{3}+l_{4})x^{3}+Ex^{2}\nonumber \\
&+(h_{1}h_{2}h_{3}h_{4}l_{1}l_{2}l_{3}l_{4})^{1/2}\bigl\{(h_{1}^{-1}+h_{2}^{-1}+h_{3}^{-1}+h_{4}^{-1}+l_{1}^{-1} +l_{2}^{-1}+l_{3}^{-1}+l_{4}^{-1})x \nonumber \\
&-(q^{1/2}+q^{-1/2})\bigr\}\Bigr]g(x)=0 \, . \nonumber 
\end{align}
Note that detailed calculation in the following subsections was performed in \cite{Ssk}.

\subsubsection{The points $P_{i}:(\nu_{i},1/\nu_{i})$ $(i =1,2,3,4 )$} $ $

We realize the blow-up at $(f,g)= (\nu_{1},1/\nu_{1})$ by setting $(f,g)=(f_{1}g_{1}+\nu_{1},g_{1}+1/\nu_{1})$.
Then the point $P_{1}:(\nu_{1},1/\nu_{1})$ corresponds to the line $g_{1}=0$.
We substitute $(f,g)=(f_{1}g_{1}+\nu_{1},g_{1}+1/\nu_{1})$ into the equation $L_1 y(z)=0 $, and set $g_{1}=0$.
Then we have
\begin{align}
 &\frac{q(\kappa_{1}-\nu_{5}z)(\kappa_{1}-\nu_{6}z)(\kappa_{1}-\nu_{7}z)(\kappa_{1}-\nu_{8}z)}{{\kappa_{1}}^{4}(\nu_{1}-z)z^{2}}y(qz) \nonumber \\
& +\frac{(q\nu_{2}-z)(q\nu_{3}-z)(q\nu_{4}-z)}{q\nu_{1}\nu_{2}\nu_{3}\nu_{4}z^{2}} y(z/q) \nonumber \\
&+\Bigl\{ -\frac{q (\kappa_{1}-\kappa_{2}) (\nu_{1}-\nu_{2})(\nu_{1}-\nu_{3})(\nu_{1}-\nu_{4})}{\kappa_{1}(f_{1}+{\nu_{1}}^{2})\nu_{2}\nu_{3}\nu_{4}(q\nu_{1}-z)}\nonumber \\
& \quad +\frac{q(\kappa_{2}-\nu_{1}\nu_{5})(\kappa_{2}-\nu_{1}\nu_{6})(\kappa_{2}-\nu_{1}\nu_{7})(\kappa_{2}-\nu_{1}\nu_{8})}{\kappa_{1}{\kappa_{2}}^{2}{\nu_{1}}^{2}(\kappa_{1}\nu_{1}-\kappa_{2}z)} \nonumber \\
& \quad -\frac{(q\nu_{2}-z)(q\nu_{3}-z)(q\nu_{4}-z)(\kappa_{1}q\nu_{1}-\kappa_{2}z)}{\kappa_{1}q\nu_{1}\nu_{2}\nu_{3}\nu_{4}(q\nu_{1}-z)z^{2}} \nonumber \\
& \quad -\frac{q(\kappa_{1}-\nu_{5}z)(\kappa_{1}-\nu_{6}z)(\kappa_{1}-\nu_{7}z)(\kappa_{1}-\nu_{8}z)}{{\kappa_{1}}^{3}z^{2}(\kappa_{1}\nu_{1} -\kappa_{2}z)} \Bigr\} y(z) =0  \, .\nonumber 
\end{align}
Let $\lambda $ be the parameter such that $q^{\lambda } = q^{3/2} \kappa _2/ \kappa _1 $.
Then the function $u(z)= z^{\lambda} y(z)/(q\nu_{1}-z)$ satisfies 
\begin{align}
&\Bigl(z-\frac{\kappa_{1}}{\nu_{5}}\Bigr)\Bigl(z-\frac{\kappa_{1}}{\nu_{6}}\Bigr)\Bigl(z-\frac{\kappa_{1}}{\nu_{7}}\Bigr)\Bigl(z-\frac{\kappa_{1}}{\nu_{8}}\Bigr)u(qz) \nonumber \\
&+(z-q^{2}\nu_{1})(z-q\nu_{2})(z-q\nu_{3})(z-q\nu_{4})u(z/q) \nonumber \\
&+\Bigl[-(q^{1/2}+q^{-1/2})z^{4} + q^{1/2} \Bigl(q \nu_{1}+\nu_{2}+ \nu_{3} + \nu_{4}+\frac{\kappa_{1}}{\nu_{5}}+\frac{\kappa_{1}}{\nu_{6}}+\frac{\kappa_{1}}{\nu_{7}} +\frac{\kappa_{1}}{\nu_{8}}\Bigr)z^{3} \nonumber \\
& \quad  -q^{3/2} \Bigl( \frac{K_1}{\kappa_2 (f_1+\nu_1^2)} +K_2 \Bigr) z^{2}+\frac{q^{3}\kappa_{1}\nu_{1}\nu_{2}\nu_{3}\nu_{4}}{\kappa_{2}} \Bigl\{ q^{-1/2} \Bigl( \frac{1}{q \nu_{1}}+\frac{1}{\nu_{2}}+\frac{1}{\nu_{3}}+\frac{1}{\nu_{4}} \nonumber \\
& \quad +\frac{\nu_{5}}{\kappa_{1}}+\frac{\nu_{6}}{\kappa_{1}}+\frac{\nu_{7}}{\kappa_{1}}+\frac{\nu_{8}}{\kappa_{1}}\Bigr)z-(q^{1/2}+q^{-1/2})\Bigr\}\Bigr]u(z)=0 \, , \nonumber 
\end{align}
where
\begin{align}
& K_1=\nu_1 (\kappa_1-\kappa_2) (\nu_1 -\nu_2) (\nu_1 -\nu_3) (\nu_1 -\nu_4) ,\nonumber \\
& K_2= \kappa_1 \nu_1 \Bigl( \frac{-\nu_1+\nu_2+\nu_3+\nu_4}{\kappa_2}+\frac{1}{\nu_5}+\frac{1}{\nu_6}+\frac{1}{\nu_7}+\frac{1}{\nu_8} \Bigr) \nonumber \\
& \qquad + \nu_2 \nu_3 \nu_4 \Bigl( -\frac{1}{\nu_1}+\frac{1}{\nu_2} +\frac{1}{\nu_3} +\frac{1}{\nu_4} +\frac{\nu_5+\nu_6+\nu_7+\nu_8}{\kappa_2}\Bigr) . \nonumber
\end{align}

Note that we used the relation $\kappa_{1}^2 \kappa_{2}^2 =q \nu_{1} \nu_{2} \dots \nu_{8} $.

We compare it with the variant of the $q$-Heun equation of degree $4$ given in Eq.~(\ref{eq:qHeunE17}).
Then we obtain the following correspondence
\begin{align}
&h_{1}=q^{3/2}\nu_{1},\quad h_{2}=q^{1/2}\nu_{2},\quad h_{3}=q^{1/2}\nu_{3},\quad h_{4}=q^{1/2}\nu_{4}, \nonumber \\
&l_{1}=\frac{q^{1/2}\kappa_{1}}{\nu_{5}},\quad l_{2}=\frac{q^{1/2}\kappa_{1}}{\nu_{6}},\quad l_{3}=\frac{q^{1/2}\kappa_{1}}{\nu_{7}},\quad l_{4}=\frac{q^{1/2}\kappa_{1}}{\nu_{8}} . \nonumber 
\end{align}
We may regard $f_1$ as an accessory parameter.

On the blow-up at the point $P_{i}$ $(i=2,\, 3,\, 4)$, we obtain the equation where the parameters $\nu _1$ and $\nu _i$ are exchanged.

\subsubsection{The points $P_{i}:(\kappa_{1}/\nu_{i}, \nu_{i}/\kappa_{2})$ $(i =5,6,7,8 )$} $ $

We realize the blow-up at $(f,g)= (\kappa_{1}/\nu_{5}, \nu_{5}/\kappa_{2})$ by setting $(f,g)=(f_{1}g_{1}+\kappa_{1}/\nu_{5},g_{1}+\nu_{5}/\kappa_{2})$.
Then the point $P_{5}:(\kappa_{1}/\nu_{5}, \nu_{5}/\kappa_{2})$ corresponds to the line $g_{1}=0$.
We substitute $(f,g)= (f_{1}g_{1}+\kappa_{1}/\nu_{5},g_{1}+\nu_{5}/\kappa_{2})$ into the equation $L_1 y(z)=0 $, and set $g_{1}=0$.
Then we have
\begin{align}
&\frac{q\nu_{5}(\kappa_{1}-\nu_{6}z)(\kappa_{1}-\nu_{7}z)(\kappa_{1}-\nu_{8}z)}{{\kappa_{1}}^{4}z^{2}} y(qz) \nonumber  \\
&+ \frac{\nu_{5}(q\nu_{1}-z)(q\nu_{2}-z)(q\nu_{3}-z)(q\nu_{4}-z)}{q\nu_{1}\nu_{2}\nu_{3}\nu_{4}z^{2}(\kappa_{1}q -\nu_{5}z)} y(z/q)  \nonumber \\
&+\Bigl\{-\frac{q (\kappa_{1}-\kappa_{2})(\nu_{5}-\nu_{6})(\nu_{5}-\nu_{7})(\nu_{5}-\nu_{8})}{\kappa_{1}(\kappa_{1}\kappa_{2}+f_{1}{\nu_{5}}^{2})(\kappa_{1}-\nu_{5}z)} \nonumber \\
& \quad - \frac{\kappa_{2}{\nu_{5}}(q\nu_{1}-z)(q\nu_{2}-z)(q\nu_{3}-z)(q\nu_{4}-z)}{q \kappa_{1} \nu_{1}\nu_{2}\nu_{3}\nu_{4}z^2 (\kappa_{2}q-\nu_{5}z)} \nonumber \\
& \quad +\frac{q(\kappa_{2}-\nu_{1}\nu_{5})(\kappa_{2}-\nu_{2}\nu_{5})(\kappa_{2}-\nu_{3}\nu_{5})(\kappa_{2}-\nu_{4}\nu_{5})}{\kappa_{1}\kappa_{2}\nu_{1}\nu_{2}\nu_{3}\nu_{4}\nu_{5}(\kappa_{2}q-\nu_{5}z)} \nonumber \\
& \quad -\frac{q\nu_{5}(\kappa_{2}-\nu_{5}z) (\kappa_{1}-\nu_{6}z)(\kappa_{1}-\nu_{7}z)(\kappa_{1}-\nu_{8}z)}{{\kappa_{1}}^{3}\kappa _2 z^{2}(\kappa_{1}-\nu_{5}z)} \Bigr\} y(z)=0 \, . \nonumber 
\end{align}
Let $\lambda $ be the parameter such that $q^{\lambda } = q^{3/2} \kappa _2/ \kappa _1 $.
Then the function $u(z)= z^{\lambda} y(z)/(\kappa_{1}-\nu_{5}z)$ satisfies 
\begin{align}
&(z-q\nu_{1})(z-q\nu_{2})(z-q\nu_{3})(z-q\nu_{4})u(z/q) \nonumber \\
&+\Bigl(z-\frac{\kappa_{1}}{q\nu_{5}}\Bigr)\Bigl(z-\frac{\kappa_{1}}{\nu_{6}}\Bigr)\Bigl(z-\frac{\kappa_{1}}{\nu_{7}}\Bigr)\Bigl(z-\frac{\kappa_{1}}{\nu_{8}}\Bigr)u(qz) \nonumber \\
&+\Bigl[-(q^{1/2}+q^{-1/2})z^{4}+ q^{1/2} \Bigl( \nu_{1}+\nu_{2}+\nu_{3}+\nu_{4}+\frac{\kappa_{1}}{q \nu_{5}}+\frac{\kappa_{1}}{\nu_{6}}+\frac{\kappa_{1}}{\nu_{7}} +\frac{\kappa_{1}}{\nu_{8}}\Bigr)z^{3} \nonumber \\
&- q^{1/2} \Bigl( \frac{K_1}{\kappa_{2} \nu_{5} (f_{1}{\nu_{5}}^{2} + \kappa_{1} \kappa_{2})}+K_2 \Bigr) z^{2}+\frac{q^{2}\kappa_{1}\nu_{1}\nu_{2}\nu_{3}\nu_{4}}{\kappa_{2}}\Bigl\{ q^{-1/2} \Bigl( \frac{1}{\nu_{1}}+\frac{1}{\nu_{2}} +\frac{1}{\nu_{3}} +\frac{1}{\nu_{4}} \nonumber \\
& +\frac{q\nu_{5}}{\kappa_{1}}+\frac{\nu_{6}}{\kappa_{1}}+\frac{\nu_{7}}{\kappa_{1}}+\frac{\nu_{8}}{\kappa_{1}}\Bigr)z-(q^{1/2}+q^{-1/2})\Bigr\}\Bigr]u(z)=0 \, , \nonumber 
\end{align}
where
\begin{align}
& K_1=q \nu_1 \nu_2 \nu_3 \nu_4 (\kappa_1-\kappa_2) (\nu_5 -\nu_6) (\nu_5 -\nu_7) (\nu_5 -\nu_8) , \nonumber \\
& K_2= \frac{\kappa _1 \kappa _2}{\nu _5 } \Bigl( \frac{\nu_1+\nu_2+\nu_3+\nu_4}{\kappa_2}-\frac{1}{\nu_5}+\frac{1}{\nu_6}+\frac{1}{\nu_7}+\frac{1}{\nu_8} \Bigr) \nonumber \\
& \qquad + q \nu_1 \nu_2 \nu_3 \nu_4 \frac{\nu _5 }{\kappa_2} \Bigl( \frac{1}{\nu_1}+\frac{1}{\nu_2} +\frac{1}{\nu_3} +\frac{1}{\nu_4} +\frac{-\nu_5+\nu_6+\nu_7+\nu_8}{\kappa_2}\Bigr) . \nonumber
\end{align}
Note that we used the relation $\kappa_{1}^2 \kappa_{2}^2 =q \nu_{1} \nu_{2} \dots \nu_{8} $.

We compare it with the variant of the $q$-Heun equation of degree $4$ given in Eq.~(\ref{eq:qHeunE17}).
Then we obtain the following correspondence
\begin{align}
    &h_{1}=q^{1/2}\nu_{1},\quad h_{2}=q^{1/2}\nu_{2},\quad h_{3}=q^{1/2}\nu_{3},\quad h_{4}=q^{1/2}\nu_{4}, \nonumber \\
    &l_{1}=\frac{\kappa_{1}}{q^{1/2}\nu_{5}},\quad l_{2}=\frac{q^{1/2}\kappa_{1}}{\nu_{6}},\quad l_{3}=\frac{q^{1/2}\kappa_{1}}{\nu_{7}},\quad l_{4}=\frac{q^{1/2}\kappa_{1}}{\nu_{8}} . \nonumber 
\end{align}
We may regard $f_1$ as an accessory parameter.

On the blow-up at the point $P_{i}$ $(i=6,\, 7,\, 8)$, we obtain the equation where the parameters $\nu _5$ and $\nu _i$ are exchanged.

\subsubsection{The other cases related to the variant of the $q$-Heun equation of degree four} $ $

We substitute $f= \nu_{1} $ to the equation $L_1 y(z)=0 $.
Let $\lambda $ be the parameter such that $q^{\lambda } = q^{1/2} \kappa _2/ \kappa _1 $.
Then the function $u(z)= z^{\lambda} y(z)$ satisfies
\begin{align}
&\Bigl(z-\frac{\kappa_{1}}{\nu_{5}}\Bigr)\Bigl(z-\frac{\kappa_{1}}{\nu_{6}}\Bigr)\Bigl(z-\frac{\kappa_{1}}{\nu_{7}}\Bigr)\Bigl(z-\frac{\kappa_{1}}{\nu_{8}}\Bigr)u(qz) \nonumber \\
& +(z-\nu_{1})(z-q\nu_{2})(z-q\nu_{3})(z-q\nu_{4})u(z/q) \nonumber \\
&+\Bigl[-(q^{1/2}+q^{-1/2})z^{4} + q^{1/2} \Bigl(\frac{\nu_{1}}{q} +\nu_{2}+\nu_{3}+\nu_{4}+\frac{\kappa_{1}}{\nu_{5}}+\frac{\kappa_{1}}{\nu_{6}}+\frac{\kappa_{1}}{\nu_{7}} +\frac{\kappa_{1}}{\nu_{8}}\Bigr)z^{3}\nonumber \\
& \quad  -q^{1/2} \Bigl( \frac{(\nu _1- \kappa _1/\nu _5)(\nu _1- \kappa _1/\nu _6)(\nu _1- \kappa _1/\nu _7)(\nu _1- \kappa _1/\nu _8)(\kappa_1-\kappa_2)}{\nu_1^2 ( g \nu_1 \kappa _2 -\kappa _1)} +K_2 \Bigr) z^{2} \nonumber \\
& \quad + \frac{q^{2}\kappa_{1}\nu_{1}\nu_{2}\nu_{3}\nu_{4}}{\kappa_{2}}\Bigl\{ q^{-1/2} \Bigl(\frac{q}{\nu_{1}}+\frac{1}{\nu_{2}}+\frac{1}{\nu_{3}} +\frac{1}{\nu_{4}}+\frac{\nu_{5}}{\kappa_{1}}+\frac{\nu_{6}}{\kappa_{1}}+\frac{\nu_{7}}{\kappa_{1}}+\frac{\nu_{8}}{\kappa_{1}}\Bigr) z \nonumber \\
& \qquad \qquad \qquad -(q^{1/2}+q^{-1/2})\Bigr\}\Bigr]u(z)=0 \, , \nonumber 
\end{align}
where $K_2$ is a value which does not depend on $z$ nor $g$.
Hence we obtain the variant of the $q$-Heun equation of degree $4$, and we may regard $g$ as an accessory parameter.
We have similar results by substituting $f= \nu_{i} $ $(i = 2,3,4)$  to the equation $L_1 y(z)=0 $.

We substitute $f= \kappa_{1}/\nu_{5} $ to the equation $L_1 y(z)=0 $.
Let $\lambda $ be the parameter such that $q^{\lambda } = q^{1/2} \kappa _2/ \kappa _1 $.
Then we have
\begin{align}
& (z-q\nu_{1})(z-q\nu_{2})(z-q\nu_{3})(z-q\nu_{4})u(z/q) \nonumber \\
& +\Bigl(z-\frac{q\kappa_{1}}{\nu_{5}}\Bigr)\Bigl(z-\frac{\kappa_{1}}{\nu_{6}}\Bigr)\Bigl(z-\frac{\kappa_{1}}{\nu_{7}}\Bigr)\Bigl(z-\frac{\kappa_{1}}{\nu_{8}}\Bigr)u(qz) \nonumber \\
& +\Bigl[-(q^{1/2}+q^{-1/2})z^{4}+ q^{1/2} \Bigl(\nu_{1}+\nu_{2}+\nu_{3}+\nu_{4}+\frac{q\kappa_{1}}{\nu_{5}}+\frac{\kappa_{1}}{\nu_{6}}+\frac{\kappa_{1}}{\nu_{7}} +\frac{\kappa_{1}}{\nu_{8}}\Bigr)z^{3} \nonumber \\
& - q^{3/2} \Bigl( \frac{(\kappa_1-\kappa_2)(\kappa_1-\nu _1 \nu_5 )(\kappa_1-\nu _1 \nu_6 )(\kappa_1-\nu _1 \nu_7 )(\kappa_1-\nu _1 \nu_8 )}{\kappa_{1}^2 \kappa_{2} \nu_{5} (\nu_{5} - \kappa_{1} g)}+K_2 \Bigr) z^{2} \nonumber \\
& +\frac{q^{3}\kappa_{1}\nu_{1}\nu_{2}\nu_{3}\nu_{4}}{\kappa_{2}}\Bigl\{ q^{-1/2} \Bigl(\frac{1}{\nu_{1}}+\frac{1}{\nu_{2}} +\frac{1}{\nu_{3}} +\frac{1}{\nu_{4}}+\frac{\nu_{5}}{q\kappa_{1}}+\frac{\nu_{6}}{\kappa_{1}}+\frac{\nu_{7}}{\kappa_{1}}+\frac{\nu_{8}}{\kappa_{1}}\Bigr)z\nonumber \\
& \qquad \qquad \qquad -(q^{1/2}+q^{-1/2})\Bigr\}\Bigr]u(z)=0 \, . \nonumber 
\end{align}
where $K_2$ is a value which does not depend on $z$ nor $g$.
Hence we obtain the variant of the $q$-Heun equation of degree $4$, and we may regard $g$ as an accessory parameter.
We have similar results by substituting $f= \kappa_{1}/\nu_{i} $ $(i = 6,7,8)$ to the equation $L_1 y(z)=0 $.

\section{On associated linear $q$-difference equation related with $q$-$P(E^{(1)}_8 )$} \label{sec:qPE8}

We review a Lax pair of the $q$-Painlev\'e equation of type $E^{(1)}_8 $ ($q$-$P(E^{(1)}_8)$) by following Yamada \cite{Y}.
Let $u_1, u_2, \dots, u_8$, $h_1$ and $h_2$ be the non-zero parameters which satisfy the condition $h_1 ^2 h_2 ^2 = q u_1 u_2 \dots u_8$.
Set 
\begin{equation}
U(z)= \prod _{i=1}^8 (z-u_i) ,
\nonumber 
\end{equation}
and define the quartic polynomials $P_n(x)$ and $P_d (x)$ by the condition 
\begin{align}
& \Bigl(z-\frac{h_2}{z} \Bigr) P_n \Bigl(z +\frac{h_2}{z} \Bigr) =  \frac{U(z)}{z^3} -\Bigl(\frac{z}{h_2}  \Bigr)^3 U\Bigl( \frac{h_2}{z}  \Bigr), \label{eq:PnPd} \\
& \Bigl(z-\frac{h_2}{z}  \Bigr) P_d \Bigl(z +\frac{h_2}{z} \Bigr) = z^5 U\Bigl( \frac{h_2}{z}  \Bigr) - \Bigl( \frac{h_2}{z}  \Bigr)^5 U(z) . \nonumber
\end{align}
Set
\begin{align}
& \psi _n (f_1, f_2 ;g ) = (f_1-g) (f_2-g) -\Bigl(\frac{h_1}{q} -h_2  \Bigr) (h_1-h_2 )\frac{1}{h_2}, \nonumber \\
& \psi _d (f_1, f_2 ;g ) = \Bigl(\frac{f_1}{h_1}q-\frac{g}{h_2} \Bigr) \Bigl(\frac{f_2}{h_1}-\frac{g}{h_2} \Bigr)- \Bigl(\frac{q}{h_1} -\frac{1}{h_2}  \Bigr) \Bigl(\frac{1}{h_1} -\frac{1}{h_2}  \Bigr) h_2 , \nonumber \\
& V(f_1, f_2;g) = q \psi _n (f_1, f_2 ;g ) P_d (g) - h_1^2 h_2^4 \psi _d (f_1, f_2 ;g ) P_n (g), \nonumber \\
& f(u)=u+\frac{h_1}{u}, \; g(u)=u + \frac{h_2}{u}, \; \overline{f}(u)= u+\frac{h_1}{qu}, \; \overline{g}(u)=u + \frac{q h_2}{u}, \nonumber \\
& \varphi ( f, g ) = (f -g ) \Bigl(\frac{f}{h_1} -\frac{g}{h_2}  \Bigr) -( h_1 -h_2)\Bigl(\frac{1}{h_1} - \frac{1}{h_2}  \Bigr).\nonumber 
\end{align}
A Lax pair for the $q$-Painlev\'e equation of type $E^{(1)}_8$ introduced in \cite{Y} is written as
\begin{align}
& L_1 = \frac{q^5 U(z/q)}{(z^2 - h_1 q^2) (f-f(z/q) )}\Bigl[ T _{z}^{-1}  -\frac{g-g(q h_1 /z)}{g- g(z/q)} \Bigr] \nonumber \\
& \quad + \frac{z^8 U(h_1 /z)}{h_1 ^4 (z^2-h_1) (f-f(z))}\Bigl[ T_z-\frac{g-g(z)}{g- g(h_1 /z)} \Bigr] \nonumber \\
& \quad + \frac{(h_1-h_2)z^2 (z^2 -q h_1 ) V( \overline{f}(z/q ) ,f ;g)}{q h_1^3 h_2^3 g \varphi (f,g) (g-g (h_1/z)) (g-g (z/q)) } , \nonumber \\
& L_2  = ( g-g(z/q)) T_z^{-1} - ( g-g(q h_1/ z) ) + ( f-f(z/q) ) (h_1/z - z/q^2 ) T \cdot T_z^{-1} . \nonumber
\end{align}
Here $T_{z}$ represents the transformation $z\rightarrow qz$ and $T$ represents the time evolution.
The $q$-Painlev\'e equation of type $E^{(1)}_8 $ ($q$-$P(E^{(1)}_8)$) for the variables $f$ and $g$ was obtained by the compatibility condition for the operators $L_1$ and $L_2$ in \cite{Y}, and the expression of $q$-$P(E^{(1)}_8)$ was also described there.
It seems that the definitions of the equation $q$-$P(E^{(1)}_8)$ and the Lax pair in \cite{KNY} is less explicit than those in \cite{KNY}, and we use the definition in \cite{Y}.

We pick up the $q$-difference equation $L_1 y(z)=0$,~i.~e.
\begin{align}
& \frac{q^5 U(z/q)}{(z^2 - h_1 q^2) (f-f(z/q) )}\Bigl[ y(z/q)  -\frac{g-g(q h_1 /z)}{g- g(z/q)} y(z) \Bigr] \label{eq:L1E8}\\
& \quad + \frac{z^8 U(h_1 /z)}{h_1 ^4 (z^2-h_1) (f-f(z))}\Bigl[ y(qz) -\frac{g-g(z)}{g- g(h_1 /z)} y(z) \Bigr] \nonumber \\
& \quad + \frac{(h_1-h_2)z^2 (z^2 -q h_1 ) V( \overline{f}(z/q ) ,f ;g)}{q h_1^3 h_2^3 g \varphi (f,g)  \{ g-g (h_1/z) \} \{ g-g (z/q) \} } y(z) = 0 , \nonumber 
\end{align}
and consider a limit with respect to the parameters $f$ and $g$.
Namely, set 
\begin{equation}
f= f(u_1 +\varepsilon c_1 ) = u_1 +\varepsilon c_1 + \frac{h_1}{ u_1 +\varepsilon c_1}, \; g= g ( u_1 +\varepsilon c_2 ) = u_1 +\varepsilon c_2 + \frac{h_2}{ u_1 +\varepsilon c_2} \label{eq:fg}
\end{equation}
and consider the limit $\varepsilon \to 0$ in Eq.~(\ref{eq:L1E8}).
Then
\begin{equation}
f -f (z)= -\frac{(z - u_1) (u_1 z-h_1)}{u_ 1 z} +O(\varepsilon  ) , \; g -g (z)= -\frac{(z - u_1) (u_1 z-h_2)}{u_ 1 z} +O(\varepsilon  )  \nonumber 
\end{equation}
as $\varepsilon \to 0 $.
Write 
\begin{equation}
 U_7(z)= \prod _{i=2}^8 (z -u_i).  \nonumber 
\end{equation}
Then $U(z)= (z-u_1) U_7(z)$.
On the coefficients of $y(z/q)$ and $ y(qz)$ in Eq.~(\ref{eq:L1E8}), we have
\begin{align}
& \frac{q^5 U(z/q)}{(z^2 - h_1 q^2) (f-f(z/q) )} = \frac{ - q^5 u_1 z U_7(z/q)}{(z^2 - h_1 q^2) (u_1 z-q h_1 ) } +O(\varepsilon  ) ,  \nonumber \\
& \frac{z^8 U(h_1 /z)}{h_1 ^4 (z^2-h_1) (f-f(z))} = \frac{z^8 u_1 U_7 (h_1 /z)}{h_1 ^4 (z^2-h_1) (z - u_1) }+O(\varepsilon  ) . \nonumber 
\end{align}
The limit of the coefficient of $ y(z)$ is more complicated.
By applying a gauge transformation for $y(z)$ and calculating the limit of Eq.~(\ref{eq:L1E8}) as $\varepsilon \to 0 $, we obtain the following theorem.
\begin{theorem}
Set
\begin{equation}
y(z)= \frac{(z-q u_1) (u_1 z - h_1)}{z} \tilde{y} (z).
\label{eq:yztildeyz}
\end{equation}
Write $f$ and $g$ as Eq.~(\ref{eq:fg}).
Then Eq.~(\ref{eq:L1E8}) tends to
\begin{align}
& \quad B^{-} (z)  \tilde{y} (z/q) +  B^{+} (z) \tilde{y} (qz )+ B^{0} (z) \tilde{y} (z) = \Bigl( C_0 \frac{c_2}{c_1 -c_2 } + C_0 ' \Bigr) \tilde{y} (z) \label{eq:B-B+B0eq} 
\end{align}
as $\varepsilon \to 0$, where 
\begin{align}
& B^{-} (z) = \frac{ (z -q^2 u_1) }{q ^2 h_1 z^2 (z^2 -q h_1 )(z^2 - q^2 h_1 ) } \prod _{j=2}^8 (z -q u_j) , \nonumber \\
& B^{+} (z) =  \frac{  q (q u_1 z  - h_1) }{ h_1 ^5 z^2 (z^2 -q h_1 )(z^2-h_1) } \prod _{j=2}^8 ( u_j z  - h_1) \nonumber \\
& B^{0} (z) = \frac{-q z (h_1^{1/2} - qu_1) }{2 h_1^{7/2} (z - h_1^{1/2})(z - qh_1^{1/2})}  \prod _{j=2}^8 (h_1^{1/2} - u_j) \nonumber   \\
& \qquad + \frac{q z (h_1^{1/2} + qu_1) }{2 h_1^{1/2} (z + h_1^{1/2})(z + qh_1^{1/2})} \prod _{j=2}^8 (h_1^{1/2} + u_j) - \frac{q^{3/2}}{h_1} ( u_1 u_2 u_3 u_4 u_5 u_6 u_7 u_8 )^{1/2} \nonumber \\
& \qquad \cdot \Bigl[ (q+1) \Bigl(\frac{z^2}{(q h_1)^2} +\frac{1}{z^{2}}\Bigr) - \Bigl(\frac{z}{q h_1} + \frac{1}{z} \Bigr) \Bigl\{ q \frac{u_1}{h_1} + \frac{1}{q u_1} + \sum _{j=2}^8 \Bigl(\frac{u_j}{h_1} + \frac{1}{u_j} \Bigr) \Bigr\} \Bigr] , \nonumber \\
& C_0 = \frac{q (h_2-h_1) }{ h_1^2 u_1 (h_1 -u_1^2) (h_2 -u_1^2) }\prod _{j=2}^8  (u_1 -u_j )  \nonumber 
\end{align}
and $  C_0 '$ is the constant which does not depend on $z$, $c_1$ and $c_2$.
\end{theorem}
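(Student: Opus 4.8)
The plan is to substitute $f=f(u_1+\varepsilon c_1)$ and $g=g(u_1+\varepsilon c_2)$ into Eq.~(\ref{eq:L1E8}), apply the gauge transformation $y(z)=\frac{(z-qu_1)(u_1z-h_1)}{z}\tilde y(z)$ of Eq.~(\ref{eq:yztildeyz}), expand in $\varepsilon$, and identify the coefficient of each of $\tilde y(z/q)$, $\tilde y(qz)$, $\tilde y(z)$ in the limit $\varepsilon\to 0$, after dividing the whole equation by the common factor $-q^{-1}u_1h_1z^2(z^2-qh_1)$. For the coefficients of $\tilde y(z/q)$ and $\tilde y(qz)$ this is a direct computation: using the already-recorded limits of the coefficients of $y(z/q)$ and $y(qz)$, the identities $U_7(z/q)=q^{-7}\prod_{j=2}^8(z-qu_j)$ and $U_7(h_1/z)=z^{-7}\prod_{j=2}^8(h_1-u_jz)$, and the values of the gauge factor at $z/q$ and $qz$, the factors $u_1z-qh_1$, $z-u_1$ and $z$ cancel and one arrives at $B^{-}(z)$ and $B^{+}(z)$.

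The coefficient of $\tilde y(z)$ is the delicate part. It is the gauge factor times the sum of three pieces: the $y(z)$-contributions of the two bracketed terms in the first two lines of Eq.~(\ref{eq:L1E8}) — which have finite, $c$-independent limits, since $f-f(z/q)$, $g-g(z/q)$, $g-g(h_1/z)$ all tend to nonzero values — and the last line, proportional to $V(\overline f(z/q),f;g)/\varphi(f,g)$. The crucial algebraic inputs are: (a) $\varphi(f(u),g(u))\equiv 0$, with the refinement $\varphi(f(u_1+\varepsilon c_1),g(u_1+\varepsilon c_2))=-\varepsilon\,\frac{(h_1-h_2)(u_1^2-h_1)(u_1^2-h_2)(c_1-c_2)}{u_1^3h_1h_2}+O(\varepsilon^2)$; (b) the identity $qu^2\psi_n(f_1,f(u);g(u))+h_1^2h_2\,\psi_d(f_1,f(u);g(u))\equiv 0$ for all $f_1$ and $u$; and (c) the consequence of the defining relations~(\ref{eq:PnPd}) that $P_d(g(u))+u^2h_2^3P_n(g(u))=\frac{h_2^3(u^2+h_2)U(u)}{u^4}$. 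Combining (b) and (c) gives $V(f_1,f(u);g(u))=q\,\psi_n(f_1,f(u);g(u))\cdot\frac{h_2^3(u^2+h_2)U(u)}{u^4}$, which vanishes at $u=u_1$ because $U(u_1)=0$; hence $V=O(\varepsilon)$ and, together with (a), $V/\varphi$ has a finite limit, equal to the ratio of the two $O(\varepsilon)$-coefficients.

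Because $V$ restricted to the curve $u\mapsto(f(u),g(u))$ factors through $U(u)$, only the term differentiating $U$ survives in the leading $\varepsilon$-coefficient of $V$; writing $c_1=(c_1-c_2)+c_2$ then splits $V/\varphi$ into a $z$-dependent but $c$-independent summand and a summand carrying $\frac{c_2}{c_1-c_2}$ whose coefficient is proportional to $q\,\frac{h_2^3(u_1^2+h_2)}{u_1^4}\,U'(u_1)\,\psi_n(\overline f(z/q),f(u_1);g(u_1))$. The vanishing of all $z$-dependence in this coefficient is the crux: one uses the factorizations $g(u_1)-g(z/q)=-\frac{(z-qu_1)(u_1z-qh_2)}{qu_1z}$, $g(u_1)-g(h_1/z)=\frac{(u_1z-h_1)(u_1h_1-h_2z)}{u_1h_1z}$, and $\psi_n(\overline f(z/q),f(u_1);g(u_1))=\frac{(h_1-h_2)(u_1z-qh_2)(h_2z-u_1h_1)}{qu_1^2zh_2}$, together with the gauge factor $\frac{(z-qu_1)(u_1z-h_1)}{z}$, the factor $z^2(z^2-qh_1)$ in the numerator of the last line of Eq.~(\ref{eq:L1E8}), and the same factor in the normalization, so that $(u_1z-qh_2)$, $(u_1h_1-h_2z)$, $(z-qu_1)$, $(u_1z-h_1)$ and all powers of $z$ cancel in pairs; the surviving constant equals $-C_0$ (here $U'(u_1)=\prod_{j=2}^8(u_1-u_j)$ and $g(u_1)=(u_1^2+h_2)/u_1$ are used to simplify), so this piece contributes $C_0\frac{c_2}{c_1-c_2}$ on the right-hand side.

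It remains to assemble the $c$-independent part of the coefficient of $\tilde y(z)$ — the two bracket-contributions and the $c$-independent summand of $V/\varphi$, each times the gauge factor and divided by the normalization — and to verify that it equals $B^{0}(z)$ up to a $z$-independent constant, which is absorbed into $C_0'$. This is a rational-function identity in $z$ that I would check by matching the residues at $z=\pm h_1^{1/2}$ and $z=\pm qh_1^{1/2}$, the behaviour as $z\to 0$ and $z\to\infty$, and the $(u_1u_2\cdots u_8)^{1/2}$-terms via the constraint $h_1^2h_2^2=qu_1\cdots u_8$, confirming along the way that the spurious poles (in particular at $z^2=qh_1$) cancel. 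I expect the main obstacle to be the cancellation of $z$-dependence in the third paragraph: one has to produce the factorizations of $g(u_1)-g(z/q)$, $g(u_1)-g(h_1/z)$ and $\psi_n(\overline f(z/q),f(u_1);g(u_1))$ and recognize that they interlock with the gauge factor and the normalization; identity (c), which forces $V$ to vanish at $u=u_1$ and must be extracted from the implicit definition~(\ref{eq:PnPd}) of $P_n$ and $P_d$, is the other step demanding care.
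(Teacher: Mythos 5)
Your proposal is correct and follows the paper's overall strategy (expansion in $\varepsilon$, the gauge transformation (\ref{eq:yztildeyz}), splitting the limit of the coefficient of $\tilde y(z)$ into a $c_2/(c_1-c_2)$ piece and a $c$-independent piece, and a final pole-matching check for $B^0$), but you organize the central computation --- the finite limit of $V(\overline f(z/q),f;g)/\varphi(f,g)$ --- by a genuinely different decomposition. The paper expands the ratio $\psi_n/\psi_d$ to first order in $\varepsilon$ (its Eq.~(\ref{eq:psinpsidratio})) and splits $V$ into five terms, each shown to be $O(\varepsilon)$ by combining that expansion with $U(u_1+\varepsilon c_2)\simeq\varepsilon c_2U_7(u_1)$. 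You instead isolate two exact identities, $qu^2\psi_n(f_1,f(u);g(u))+h_1^2h_2\,\psi_d(f_1,f(u);g(u))\equiv0$ and $P_d(g(u))+u^2h_2^3P_n(g(u))=h_2^3(u^2+h_2)U(u)/u^4$ (both of which check out against the definitions and Eq.~(\ref{eq:PnPd})), giving the clean factorization $V(f_1,f(u);g(u))=q\,\psi_n\cdot h_2^3(u^2+h_2)U(u)/u^4$ on the curve $u\mapsto(f(u),g(u))$; the chain rule with $c_1=c_2+(c_1-c_2)$ then yields the split, and your account of why the $c_2/(c_1-c_2)$ coefficient is $z$-independent --- the interlocking factorizations of $g(u_1)-g(z/q)$, $g(u_1)-g(h_1/z)$ and $\psi_n(\overline f(z/q),f(u_1);g(u_1))$ against the gauge factor and the normalization $-q^{-1}u_1h_1z^2(z^2-qh_1)$ --- is exactly right and reproduces $C_0$. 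Your route makes the mechanism of the cancellation ($V$ vanishes on the diagonal curve precisely because $U(u_1)=0$) more transparent and reduces the $\varepsilon$-bookkeeping, at the cost of having to extract the second identity from the implicit definition of $P_n,P_d$; the paper's version is a heavier but more direct asymptotic computation. Your final step (matching the $c$-independent part with $B^0$ up to a constant by comparing residues at $z=\pm h_1^{1/2},\pm qh_1^{1/2}$ and checking cancellation of spurious poles) is left as a rational-function verification, but this is no less explicit than the paper's own assertion that the poles of $H(z)$ at $z=\pm(qh_1)^{1/2},\,h_1u_1/h_2,\,qh_2/u_1$ cancel.
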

\begin{proof}
As $\varepsilon \to 0$, we have
\begin{align}
& \varphi (f,g) = -\frac{(h_1-h_2)  (h_1-u_1^2) (h_2-u_1^2)(c_1-c_2)}{u_1^3 h_1 h_2} \varepsilon +O(\varepsilon ^2 ) ,  \nonumber \\
& \psi _n (\overline{f}(z/q ) ,f ;g ) = \frac{(h_1-h_2)(u_1 z-  q h_2 ) (h_2 z - h_1 u_1)}{q h_2 u_1^2 z }+O(\varepsilon  ) , \nonumber \\
& \psi _d (\overline{f}(z/q ) ,f ;g ) = - \frac{(h_1-h_2)(u_1 z-  q h_2 ) (h_2 z - h_1 u_1)}{h_1 ^2 h_2 ^2 z }+O(\varepsilon  ) \nonumber
\end{align}
and
\begin{align}
& \frac{\psi _n (\overline{f}(z/q ) ,f ;g )}{\psi _d (\overline{f}(z/q ) ,f ;g ) } = -\frac{h_1^2 h_2}{q u_1^2} \label{eq:psinpsidratio} \\
& \quad +  \frac{ h_1^2 h_2 }{q u_1^3 } \Bigl\{ \frac{h_2 (h_1-u_1^2) (h_2 + u_1^2 ) (u_1 z-h_1) (z - q u_1) }{u_1^2 (h_1-h_2) (u_1 z - h_2 q) (h_2 z - h_1 u_1)}  (c_1-c_2)+ 2  c_2 \Bigr\}  \varepsilon +O(\varepsilon ^2 ) . \nonumber
\end{align}
Set $u= u_1 +c_2 \varepsilon $.
Then $g= u + h_2 /u$ and it follows from Eq.~(\ref{eq:PnPd}) that
\begin{align}
& V(\overline{f}(z/q ) ,f ; g )  \nonumber \\
& = q \psi _n (\overline{f}(z/q ) ,f ; g ) P_d ( u + h_2 /u ) - h_1^2 h_2^4 \psi _d (\overline{f}(z/q ) ,f ;g ) P_n (h_2 ,u + h_2 / u ) \nonumber \\
& = q \psi _n (\overline{f}(z/q ) ,f ; g ) \frac{u^3 (u^2 -u_1^2  ) U( h_2/ u ) }{u- h_2/u} \nonumber \\
& \quad +q \psi _n (\overline{f}(z/q ) ,f ; g ) \frac{u^3 u_1^2 U( h_2 / u ) }{u- h_2 /u} + h_1^2 h_2 \psi _d (\overline{f}(z/q ) ,f ;g ) \frac{u ^3 U( h_2 /u ) }{u- h_2 /u } \nonumber \\
& \quad - q \psi _n (\overline{f}(z/q ) ,f ; g ) \frac{ ( h_2 /u )^5 U(u)}{u- h_2 /u }  - h_1^2 h_2^4 \psi _d (\overline{f}(z/q ) ,f ;g ) \frac{U(u)/ u^3  }{u- h_2 /u } .  \nonumber 
\end{align}
We define the equivalence by 
\begin{align}
& A(\varepsilon ) \simeq B(\varepsilon ) \; \Leftrightarrow \lim _{\varepsilon \to 0} \frac{A(\varepsilon )}{B(\varepsilon )}=1.  \nonumber 
\end{align}
Then
$U(u) = U(u_1 +\varepsilon c_2 ) \simeq \varepsilon c_2 U_7 (u_1 )$ and $U(h_2 /u ) \simeq - (u_1 - h_2 /u_1 ) U_7 (h_2 /u_1 )$.
Hence 
\begin{align}
& q \psi _n (\overline{f}(z/q ) ,f ; g ) \frac{u^3 (u^2 -u_1^2  ) U( h_2 /u ) }{u- h_2 /u}  \nonumber \\
&  \simeq - 2 q u_1^4 U_7 ( h_2 /u_1 ) c_2  \psi _n (\overline{f}(z/q ) ,f ; g )  \varepsilon \simeq  2 u_1^2 h_1^2 h_2  c_2  U_7 ( h_2 /u_1 ) \psi _d (\overline{f}(z/q ) ,f ; g ) \varepsilon .
\nonumber 
\end{align}
It follows from Eq.~(\ref{eq:psinpsidratio}) that 
\begin{align}
& \psi _n (\overline{f}(z/q ) ,f ; g ) q \frac{u^3 u_1^2 U( h_2 /u ) }{u- h_2 /u} + h_1^2 h_2 \psi _d (\overline{f}(z/q ) ,f ;g ) \frac{u ^3 U( h_2 /u ) }{u- h_2 /u }  \nonumber \\
& \simeq - \psi _d (\overline{f}(z/q ) ,f ; g )  U_7 (h_2 /u_1 ) \nonumber \\
& \quad \cdot \Bigl\{ \frac{h_1^2 h_2^2 (h_1 -u_1^2 ) (h_2 + u_1^2 ) (u_1 z-h_1) (z - q u_1) }{(h_1-h_2) (u_1 z - h_2 q) (h_2 z - h_1 u_1)}  (c_1-c_2) + 2 h_1^2 h_2 u_1 ^2  c_2 \Bigr\}  \varepsilon . \nonumber 
\end{align}
Similarly, we have
\begin{align}
& - q \psi _n (\overline{f}(z/q ) ,f ; g ) \frac{ ( h_2 /u )^5 U(u)}{u- h_2 /u}  - h_1^2 h_2^4 \psi _d (\overline{f}(z/q ) ,f ;g ) \frac{U(u) /u^3 }{u- h_2 / u }  \nonumber \\
& \simeq  q \frac{h_1^2 h_2}{q u_1^2} ( h_2 / u_1 )^5 \varepsilon  c_2 \frac{  U_7 (u_1 )}{u_1 - h_2 /u_1}  \psi _d (\overline{f}(z/q ) ,f ; g )   - h_1^2 h_2^4 \frac{\varepsilon  c_2 }{u_1^3}\frac{ U_7 (u_1 ) }{u_1- h_2 /u_1 } \psi _d (\overline{f}(z/q ) ,f ;g ) \nonumber \\
&  = - \frac{h_1^2 h_2^4 ( u_1 + h_2/ u_1 )}{u_1^5}  U_7 (u_1 )  \psi _d (\overline{f}(z/q ) ,f ; g ) c_2  \varepsilon  . \nonumber 
\end{align}
Thus
\begin{align}
& V(\overline{f}(z/q ) ,f ; g ) \simeq  - \frac{h_1^2 h_2^4 }{u_1^6} ( u_1^2 + h_2 ) U_7 (u_1 )  \psi _d (\overline{f}(z/q ) ,f ; g )  c_2  \varepsilon  \nonumber  \\
& \quad - \psi _d (\overline{f}(z/q ) ,f ; g )  U_7 (h_2 /u_1 ) \frac{h_1^2 h_2^2 (-u_1^2+h_1) (u_1 z-h_1) (z - q u_1) (u_1^2+h_2)}{(h_1-h_2) (u_1 z - h_2 q) (h_2 z - h_1 u_1)}  (c_1-c_2)  \varepsilon \nonumber \\
& \simeq   \frac{ h_2^2 (h_1-h_2) ( u_1^2 + h_2 ) (u_1 z-  q h_2 ) (h_2 z - h_1 u_1)}{ u_1^6 z } U_7 (u_1 ) c_2 \varepsilon  \nonumber   \\
& \quad +  \frac{(-u_1^2+h_1) (u_1^2+h_2) (u_1 z-h_1) (z - q u_1) }{z }  U_7 (h_2 /u_1 ) (c_1-c_2) . \varepsilon . \nonumber 
\end{align}
On the other hand, we have
\begin{align}
& \frac{(h_1-h_2)z^2 (z^2 -q h_1 ) }{q h_1^3 h_2^3 g \varphi (f,g) ( g-g (h_1/z) )( g-g (z/q)) }  \nonumber \\
& \simeq \frac{- z^4 (z^2 -q h_1 ) u_1^6 }{ h_1 h_2^2  (h_2 + u_1^2 )  (h_1 -u_1^2 ) (h_2 -u_1^2) (h_2 z - h_1 u_1) (z - q u_1) (u_1 z-h_1) (u_1 z-h_2 q)  (c_1-c_2)\varepsilon } . \nonumber 
\end{align}
Therefore
\begin{align}
& \frac{(h_1-h_2)z^2 (z^2 -q h_1 )  V( \overline{f}(z/q ) ,f ;g)}{q h_1^3 h_2^3 g \varphi (f,g) ( g-g (h_1/z))( g-g (z/q)) }  \nonumber \\
& \simeq - \frac{(h_1-h_2)  U_7 (u_1 ) z^3 (z^2 -q h_1 ) c_2 }{ h_1 (-u_1^2+h_2) (-u_1^2+h_1) (z - q u_1) (u_1 z-h_1) (c_1-c_2)  }   \nonumber \\
& - \frac{ u_1^6 U_7 (h_2 /u_1 ) z^3 (z^2 -q h_1 )}{ h_1 h_2^2  (-u_1^2+h_2) (h_2 z - h_1 u_1) (u_1 z-h_2 q) } . \nonumber 
\end{align}
By substituting it into Eq.~(\ref{eq:L1E8}), we have
\begin{align}
& \frac{ - q^5 u_1 z U_7(z/q)}{(z^2 - h_1 q^2) (u_1 z-q h_1 ) } y(z/q) + \frac{ q^5 u_1 z U_7(z/q)}{(z^2 - h_1 q^2) } \frac{(h_2 z - q h_1 u_1) }{ h_1 (z - q u_1) (u_1 z-h_2 q)} y(z)  \nonumber \\
& \quad + \frac{z^8 u_1 U_7 (h_1 /z)}{h_1 ^4 (z^2-h_1) (z - u_1) } y(qz) -\frac{z^8 u_1 U_7 (h_1 /z)}{h_1 ^4 (z^2-h_1)} \frac{h_1 (u_1 z-h_2)}{(h_2 z - h_1 u_1) (u_1 z-h_1)} y(z) \nonumber \\
& \quad -  \frac{(h_1-h_2)  U_7 (u_1 ) z^3 (z^2 -q h_1 ) c_2 }{ h_1 (h_1 -u_1^2) (h_2 -u_1^2) (z - q u_1) (u_1 z-h_1) (c_1-c_2)  } y(z)  \nonumber \\
& \quad - \frac{ u_1^6 U_7 (h_2 /u_1 ) z^3 (z^2 -q h_1 )}{ h_1 h_2^2  (h_2 -u_1^2) (h_2 z - h_1 u_1) (u_1 z-h_2 q) }  y(z) = 0 . \nonumber 
\end{align}
Recall that the function $\tilde{y} (z)$ was defined in Eq.~(\ref{eq:yztildeyz}) and it follows that $y(z/q)= z^{-1} q (z/q -q u_1) (u_1 z/q  - h_1) \tilde{y} (z/q)$ and $y(qz)= z^{-1} q^{-1} (qz -q u_1) (q u_1 z  - h_1) \tilde{y} (qz )$.
Then we obtain
\begin{align}
& \frac{ - q^{-3} (z -q^2 u_1) \prod _{j=2}^8 (z - q u_j)}{z^2 (z^2 -q h_1 )(z^2 - h_1 q^2) }  \tilde{y} (z/q) - \frac{ (q u_1 z  - h_1) \prod _{j=2}^8  (u_j z -h_1)}{h_1 ^4 z^2 (z^2 -q h_1 )(z^2-h_1) }  \tilde{y} (qz ) \label{eq:yzqyqzyHz} \\
& + H(z) \tilde{y} (z)  - \frac{(h_1-h_2) c_2 \prod _{j=2}^8  (u_1 -u_j ) }{ u_1 h_1 (h_1 -u_1^2) (h_2 -u_1^2) (c_1-c_2) } \tilde{y} (z) = 0 , \nonumber 
\end{align}
where
\begin{align}
& H(z)= \frac{ q^5 (h_2 z - q h_1 u_1) (u_1 z - h_1) U_7(z/q)}{h_1 z^2 (z^2 -q h_1 )(z^2 - q^2 h_1 ) (u_1 z-h_2 q)} \nonumber \\
& \quad  -\frac{z^5 (u_1 z-h_2) (z-q u_1) U_7 (h_1 /z)}{h_1 ^3 (z^2 -q h_1 ) (z^2-h_1) (h_2 z - h_1 u_1) } - \frac{ u_1^5 U_7 (h_2 /u_1 ) (z-q u_1) (u_1 z - h_1) }{ h_1 h_2^2  (h_2 -u_1^2 ) (h_2 z - h_1 u_1) (u_1 z-q h_2) }  . \nonumber 
\end{align}
It is shown that the poles $z= \pm (q h_1 )^{1/2} , h_1 u_1/h_2, q h_2 /u_1$ of the function $H(z) $ are cancelled, and we have
\begin{align}
& H(z)= \frac{z (h_1^{1/2} - qu_1) (h_1^{1/2} - u_2) (h_1^{1/2} - u_3) \cdots (h_1^{1/2} - u_8) }{2 h_1^{5/2} (z - h_1^{1/2})(z - qh_1^{1/2})} \label{eq:Hz03} \\
& \qquad  -\frac{z (h_1^{1/2} + qu_1) (h_1^{1/2} + u_2) (h_1^{1/2} + u_3) \cdots (h_1^{1/2} + u_8) }{2 h_1^{5/2} (z + h_1^{1/2})(z + qh_1^{1/2})} \nonumber \\
&  \qquad + h_1 h_2 (q+1) (z^2/(q h_1)^2 +z^{-2}) - (z/(q h_1) +z^{-1} ) h_2 ( q u_1 + u_2  \nonumber \\
&  \qquad + u_3 + \cdots + u_8  + h_1/(q u_1) + h_1/u_2 + h_1/u_3 + \cdots + h_1/u_8 ) +C , \nonumber
\end{align}
where $C$ is a constant which does not depend on the variable $z$ and the parameter $c_1$ and $c_2$.
Recall that there is a relation $h_1^2 h_2^2 =q u_1 u_2 u_3 u_4 u_5 u_6 u_7 u_8 $.
By substituting Eq.~(\ref{eq:Hz03}) and $h_2 = ( q u_1 u_2 u_3 u_4 u_5 u_6 u_7 u_8 )^{1/2} /h_1$ into Eq.~(\ref{eq:yzqyqzyHz}), we obtain the theorem.
\end{proof}
Set $z= (qh_1)^{1/2} x $, $\tilde{y} (z) = u(x) $, $u_1 = q^{-1} h_1^{1/2} v_1$,
$u_2 = h_1^{1/2} v_2, \dots ,u_8 = h_1^{1/2} v_8$ and $E=C_0 c_2/(c_1 -c_2 ) + C_0 ' $ in Eq.~(\ref{eq:B-B+B0eq}).
Then we have
\begin{align}
& \frac{ \prod _{j=1}^8 ( x -q^{1/2} v_j )  }{q  x^2 (x^2 -1 )( x^2 - q ) } u(x/q) + \frac{ \prod _{j=1}^8 ( q^{1/2} v_j x - 1)}{ q x^2 (x^2 -1 )(q x^2 - 1) } u(qx) \label{eq:B0eq} \\
& \quad  + \tilde{B}^{0} (x) u(x) = E u(x) , \nonumber 
\end{align}
where 
\begin{align}
& \tilde{B}^{0} (x) = \frac{-q^{1/2} x }{2 (  x - q^{-1/2})( x - q^{1/2})}  \prod _{j=1}^8 (1 - v_j ) + \frac{q^{1/2} x }{2 ( x+ q^{-1/2} )( x + q^{1/2} )} \prod _{j=1}^8 (1 + v_j ) \nonumber \\
& \qquad  - ( v_1 v_2 v_3 v_4 v_5 v_6 v_7 v_8 )^{1/2} \Bigl[ (q+1) \Bigl( x^2 +\frac{1}{ x^2}\Bigr) - q^{1/2} \Bigl( x + \frac{1}{ x} \Bigr) \sum _{j=1}^8 \Bigl(v_j + \frac{1}{v_j } \Bigr) \Bigr] . \nonumber 
\end{align}

We compare it with the firstly degenerated Ruijsenaars-van Diejen operator.
Let $\exp(\pm ia_{-}\partial_{z})$ be the shift operator such that $\exp(\pm ia_{-}\partial_{z}) f(z) = f(z \pm ia_{-}) $.
The firstly degenerated Ruijsenaars-van Diejen operator $\tilde{A}^{\langle 1 \rangle} (h;z)$ was discussed in \cite{vD,TakR}, and it was expressed in \cite{TakR} as 
\begin{equation}
\tilde{A}^{\langle 1 \rangle} (h;z) = \tilde{V}^{\langle 1 \rangle} (h;z)\exp(-ia_{-}\partial_{z})+\tilde{W}^{\langle 1 \rangle}(h;z)\exp(ia_{-}\partial_{z}) + U^{\langle 1 \rangle} (h;z), \nonumber 
\end{equation}
where
\begin{align}
& \tilde{V}^{\langle 1 \rangle} (h;z) = \frac{\prod_{n=1}^8 (1-e^{-2\pi i z}e^{2\pi i \tilde{h}_n }e^{-\pi a_{-}})}{e^{-2\pi a_{-}} e^{-4 \pi i z }(1-e^{-4 \pi i z })(1-e^{-4\pi i z}e^{-2\pi a_{-}})} , \nonumber \\
& \tilde{W}^{\langle 1 \rangle} (h;z) = \frac{\prod_{n=1}^8 (1-e^{2\pi i z}e^{2\pi i \tilde{h}_n }e^{-\pi a_{-}})}{e^{-2\pi a_{-}} e^{4 \pi i z}(1-e^{4 \pi i z})(1-e^{4\pi i z}e^{-2\pi a_{-}})}, \nonumber \\
& U^{\langle 1 \rangle} (h;z) = \frac{\prod_{n=1}^8 (e^{2 \pi i \tilde{h}_n }- 1)}{2(1-e^{2\pi i z}e^{\pi a_- })(1-e^{-2\pi i z}e^{\pi a_- })} + \frac{\prod_{n=1}^8 (e^{2 \pi i \tilde{h}_n } + 1) }{2(1+e^{2\pi i z}e^{\pi a_- })(1+e^{-2\pi i z}e^{\pi a_- })} \nonumber \\
& \qquad \qquad \quad  + e^{-\pi a_- } \prod_{n=1}^8 e^{\pi i \tilde{h}_n } \cdot \Big[ (e^{2\pi i z} +  e^{-2\pi i z} ) \sum _{n=1}^8  ( e^{2\pi i \tilde{h}_n }+ e^{-2\pi i \tilde{h}_n })  \nonumber \\
& \qquad \qquad \qquad \qquad \qquad \qquad \qquad \qquad - (e^{\pi a_- }+e^{-\pi a_- } ) (e^{4\pi i z} + e^{-4\pi i z} ) \Big] . \nonumber 
\end{align}
Set $x= e^{2\pi i z}$, $q= e^{-2\pi a_{-}}$ and $v_n = e^{2\pi i \tilde{h}_n }$.
Then we have $\exp(\pm ia_{-}\partial_{z}) f(e^{2\pi i z}) =  f(e^{2\pi i (z \pm  ia_-)})=  f(q^{\pm 1} x) $. 
It follows from a straightforward calculation that the equation $\tilde{A}^{\langle 1 \rangle} (h;z) f(z)= E f(z) $ is equivalent to Eq.~(\ref{eq:B0eq}).
Therefore we obtain the following theorem.
\begin{theorem}
The $q$-difference equation in Eq.~(\ref{eq:B0eq}) which was obtained from the associated linear equation of the $q$-Painlev\'e equation of type $E^{(1)}_8$ is equivalent to the equation $\tilde{A}^{\langle 1 \rangle} (h;z) f(z)= E f(z) $, where $ \tilde{A}^{\langle 1 \rangle} (h;z) $ is the firstly degenerated Ruijsenaars-van Diejen operator and $E$ is an arbitrary complex number.
\end{theorem}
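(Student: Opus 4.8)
The plan is to prove the stated equivalence by a direct substitution into the Ruijsenaars--van Diejen operator followed by a term-by-term comparison with the three coefficients of Eq.~(\ref{eq:B0eq}); the claim about $E$ is then a separate, easy remark. I would first record the substitution $x = e^{2\pi i z}$, $q = e^{-2\pi a_{-}}$ and $v_n = e^{2\pi i \tilde{h}_n}$ $(n = 1,\dots,8)$, so that $e^{\mp \pi a_{-}} = q^{\pm 1/2}$ and, as noted just before the statement, $\exp(\pm i a_{-}\partial_z) f(e^{2\pi i z}) = f(q^{\pm 1}x)$. Under this substitution $\tilde{A}^{\langle 1 \rangle}(h;z) f(z) = E f(z)$ becomes a three-term $q$-difference equation in $x$ whose shift structure matches that of Eq.~(\ref{eq:B0eq}), with $f(q^{-1}x)$, $f(qx)$, $f(x)$ in the roles of $u(x/q)$, $u(qx)$, $u(x)$.

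I would then simplify the three coefficients. For $\tilde{V}^{\langle 1 \rangle}(h;z)$: writing $e^{-2\pi i z} = x^{-1}$ and $e^{-4\pi i z} = x^{-2}$, the numerator becomes $x^{-8}\prod_{n=1}^{8}(x - q^{1/2}v_n)$ and the denominator $x^{-6} q (x^2 - 1)(x^2 - q)$, whence $\tilde{V}^{\langle 1 \rangle}$ equals exactly the coefficient of $u(x/q)$ in Eq.~(\ref{eq:B0eq}); no auxiliary rescaling of the equation is required. The coefficient $\tilde{W}^{\langle 1 \rangle}(h;z)$ is treated in the same way and yields the coefficient of $u(qx)$, using $(-1)^{8} = 1$ to rewrite $\prod(1 - q^{1/2}v_n x)$ as $\prod(q^{1/2}v_n x - 1)$.

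For the middle coefficient $U^{\langle 1 \rangle}(h;z)$ I would handle its three summands separately. With $e^{\pi a_{-}} = q^{-1/2}$ and the identity $(1 - q^{-1/2}x)(1 - q^{-1/2}x^{-1}) = -q^{-1/2}x^{-1}(x - q^{1/2})(x - q^{-1/2})$, the first summand becomes $-q^{1/2}x\,\prod_{n=1}^{8}(1 - v_n)\big/\bigl(2(x - q^{1/2})(x - q^{-1/2})\bigr)$ and, symmetrically, the second becomes $q^{1/2}x\,\prod_{n=1}^{8}(1 + v_n)\big/\bigl(2(x + q^{1/2})(x + q^{-1/2})\bigr)$, i.e.\ the first two terms of $\tilde{B}^{0}(x)$. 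In the third summand, $e^{-\pi a_{-}}\prod_n e^{\pi i \tilde{h}_n} = q^{1/2}(v_1 \cdots v_8)^{1/2}$, $e^{\pm 2\pi i z} \mapsto x^{\pm 1}$, $e^{\pm 2\pi i \tilde{h}_n} \mapsto v_n^{\pm 1}$, $e^{\pm 4\pi i z} \mapsto x^{\pm 2}$, so it becomes $q^{1/2}(v_1\cdots v_8)^{1/2}\bigl[(x + x^{-1})\sum_{n=1}^{8}(v_n + v_n^{-1}) - (q^{1/2} + q^{-1/2})(x^2 + x^{-2})\bigr]$, which, via $q^{1/2}(q^{1/2} + q^{-1/2}) = q + 1$, is precisely the remaining term of $\tilde{B}^{0}(x)$. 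Hence $U^{\langle 1 \rangle}(h;z) = \tilde{B}^{0}(x)$ after substitution, the three coefficients coincide, and since passing between the two equations only involves the invertible change of variable $x = e^{2\pi i z}$ together with $f \leftrightarrow u$, the two equations have the same solution set.

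The remaining point, that $E$ is an arbitrary complex number, follows because Eq.~(\ref{eq:B0eq}) was derived with $E = C_0 c_2/(c_1 - c_2) + C_0'$, where $C_0 \neq 0$ as soon as $u_1,\dots,u_8$ are pairwise distinct and $h_1, h_2 \neq u_1^{2}$; as $c_1, c_2$ vary the ratio $c_2/(c_1 - c_2)$ sweeps out all of $\C$, hence so does $E$. I expect no genuine obstacle: the only care needed is the bookkeeping of the reciprocal-and-sign manipulations of the exponential prefactors in $\tilde{V}^{\langle 1 \rangle}$, $\tilde{W}^{\langle 1 \rangle}$, $U^{\langle 1 \rangle}$ and the consistent choice of branch for $(v_1 \cdots v_8)^{1/2}$ matching $\prod_n e^{\pi i \tilde{h}_n}$, which is exactly the "straightforward calculation" alluded to before the statement.
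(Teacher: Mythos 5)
Your proposal is correct and follows exactly the route the paper takes: the paper's argument is precisely the substitution $x=e^{2\pi i z}$, $q=e^{-2\pi a_-}$, $v_n=e^{2\pi i\tilde h_n}$ followed by the "straightforward calculation" whose details you have written out, and your term-by-term identifications of $\tilde V^{\langle 1\rangle}$, $\tilde W^{\langle 1\rangle}$ and $U^{\langle 1\rangle}$ with the three coefficients of Eq.~(\ref{eq:B0eq}) all check out. The closing remark on why $E$ ranges over all of $\C$ is a harmless addition beyond what the paper records.
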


\section*{Acknowledgements}
The third author was supported by JSPS KAKENHI Grant Number JP18K03378.


\begin{thebibliography}{9999}
\bibitem{BTVZ}
P.~Baseilhac, S.~Tsujimoto, L.~Vinet, A.~Zhedanov, The Heun-Askey-Wilson Algebra and the Heun Operator of Askey-Wilson Type. {\it Annales Henri Poincar\'e} {\bf 20} (2019), 3091--3112.
\bibitem{DK}
B.~Dubrovin, A.~Kapaev, A Riemann-Hilbert Approach to the Heun Equation, {\it SIGMA} {\bf 14} (2018), paper 093.
\bibitem{Hahn}
W.~Hahn, On linear geometric difference equations with accessory parameters, {\it Funkcial. Ekvac.} {\bf 14} (1971), 73--78.
\bibitem{HMST}
N.~Hatano, R.~Matsunawa, T.~Sato, K.~Takemura, Variants of $q$-hypergeometric equation, to appear in {\it Funkcial. Ekvac.}, {\sf arXiv:1910.12560}.
\bibitem{JS} M.~Jimbo, H.~Sakai, A $q$-Analog of the Sixth Painlev\'e Equation. {\it Lett. Math. Phys.}  {\bf 38} (1996), 145--154
\bibitem{KNY} K.~Kajiwara, M.~Noumi, Y.~Yamada, Geometric aspects of Painlev\'e equations. {\it J. Phys. A} {\bf 50} (2017), 073001.
\bibitem{MST}
R.~Matsunawa, T.~Sato, K.~Takemura, Variants of confluent $q$-hypergeometric equations, {\sf arXiv:2005.13223}.
\bibitem{NRY} M.~Noumi, S.~Ruijsenaars, Y.~Yamada, The elliptic Painleve Lax equation vs. van Diejen's 8-coupling elliptic Hamiltonian, {\it SIGMA} {\bf 16} (2020), paper 063.
\bibitem{RuiN} S.~N.~M.~Ruijsenaars, Integrable $BC_N$ analytic difference operators: Hidden parameter symmetries and eigenfunctions, in Proceedings Cadiz 2002 NATO Advanced Research Workshop "New trends in integrability and partial solvability", NATO Science Series {\bf 132}, 217--261, Kluwer, Dordrecht, 2004.
\bibitem{Sak}
H.~Sakai, Rational surfaces associated with affine root systems and geometry of the Painlev\'e equations, {\it Commun. Math. Phys.} {\bf 220} (2001), 165--229.
\bibitem{Ssk}
S.~Sasaki, Lax pair of $q$-Painlev\'e equation and $q$-Heun equation (in Japanese), Master's thesis (2021).
\bibitem{TakMH}
K.~Takemura, Middle convolution and Heun's equation, {\it SIGMA} {\bf 5} (2009), paper 040.
\bibitem{TakR}
K.~Takemura, Degenerations of Ruijsenaars-van Diejen operator and $q$-Painleve equations, {\it J. Integrable Systems} {\bf 2} (2017), xyx008.
\bibitem{TakqH}
K.~Takemura, On $q$-deformations of the Heun equation, {\it SIGMA} {\bf 14} (2018), paper 061.
\bibitem{vD} J.~F.~van Diejen, Difference Calogero-Moser systems and finite Toda chains, {\it J. Math. Phys.} {\bf 36} (1995), 1299--1323.
\bibitem{Y} Y.~Yamada, Lax formalism for $q$-Painleve equations with affine Weyl group symmetry of type $E^{(1)}_n$, {\it Int. Math. Res. Notices} {\bf 2011} (2011), 3823--3838.
\end{thebibliography}
\end{document}